\newcommand{\R}{\mathbb{R}}
\newcommand{\NN}{\mathcal{N}}
\newcommand{\cH}{\mathcal{H}}
\newcommand{\sgn}{\text{\rm sgn}}
\newcommand{\supp}{\text{\rm supp}}
\newcommand{\diam}{{\rm{diam\,}}}
\newcommand{\ve}{\varepsilon}
\newcommand{\erre}{\mathbb{R}}
\newcommand{\T}{\mathcal{T}}
\renewcommand{\L}{\mathcal{L}}
\newcommand{\F}{\mathcal{F}}
\newcommand{\RCD}{\mathsf{RCD}}
\newcommand{\CD}{\mathsf{CD}}
\newcommand{\PP}{\mathsf{P}}
\newcommand{\Geo}{{\rm Geo}}
\newcommand{\MCP}{\mathsf{MCP}}
\newcommand{\abs}[1]{\left\vert#1\right\vert}
\newcommand{\Real}{\mathbb{R}}
\newcommand{\I}{\mathcal{I}}
\renewcommand{\L}{\mathcal{L}}
\newcommand{\q}{\mathfrak{q}}
\renewcommand{\P}{\mathbb P}
\renewcommand{\P}{\mathcal{P}}
\renewcommand{\H}{\mathcal{H}}
\newcommand{\mm}{\mathfrak m}
\newcommand{\qq}{\mathfrak q}
\newcommand{\ee}{{\rm e}}
\newcommand{\sfd}{\mathsf d}
\newcommand{\Opt}{\mathrm{OptGeo}}
\theoremstyle{plain}
\newtheorem{lemma}{Lemma}[section]
\newtheorem{theorem}[lemma]{Theorem}
\newtheorem{proposition}[lemma]{Proposition}
\newtheorem{corollary}[lemma]{Corollary}
\newtheorem*{theorem*}{Theorem}
\newtheorem*{maintheorem*}{Main Theorem}
\theoremstyle{definition}
\newtheorem{definition}[lemma]{Definition}
\newtheorem*{definition*}{Definition}
\newtheorem{remark}[lemma]{Remark}
\numberwithin{equation}{section}
\title{Isoperimetric inequality \\under Measure-Contraction property}
\author{
Fabio Cavalletti\thanks{SISSA, Trieste 34136, Italy.} \ and 
Flavia Santarcangelo\thanks{SISSA, Trieste 34136, Italy.}}
\date{}                                           
\begin{document}
\maketitle

\begin{abstract}
We prove that if $(X,\sfd,\mm)$ is an essentially non-branching metric measure space with 
$\mm(X)=1$, having Ricci curvature bounded from below by $K$  and dimension bounded above by $N \in (1,\infty)$, understood as a synthetic condition
 called Measure-Contraction property, 
then a sharp isoperimetric inequality  \`a la 
L\'evy-Gromov holds true. Measure theoretic rigidity is also obtained.
\end{abstract}

\bibliographystyle{plain}


\section{Introduction}

The isoperimetric problem is one of the most classical problems in mathematics; it addresses the following natural problem:
given a space $X$ what is the minimal amount of area needed to enclose a fixed volume $v$. If the space $X$ has a simple structure or has many symmetries the problem can be completely solved and the ``optimal shapes'' can be explicitly described (e.g. Euclidean space and the sphere). In the general case however one cannot hope to obtain a complete solution to the problem and a comparison result is already completely satisfactory. 
Probably the most popular result in this direction is the
L\'evy-Gromov isoperimetric  inequality \cite[Appendix C]{Gro} 
stating that if $E$ is a (sufficiently regular) subset of a Riemannian manifold $(M, g)$ of dimension $N$ and Ricci bounded below by $K > 0$, then
\begin{equation}\label{E:LGiso}
\frac{|\partial E|}{|M|} \geq \frac{|\partial B|}{|S|},
\end{equation}
where $B$ is a spherical cap in the model sphere $S$, i.e. the $N$-dimensional round sphere with constant Ricci curvature equal to $K$, and $|M|,|S|,|\partial E|,|\partial B|$ denote the appropriate $N$ or $N-1$ dimensional volume, and where $B$ is chosen so that $|E|/|M|=|B|/|S|$.

L\'evy-Gromov isoperimetric inequality has been then extended to more general settings; for the scope of this note, the most relevant
progress was the one obtained by E. Milman \cite{Mil} for smooth manifolds with densities, i.e. 
smooth Riemannian manifold whose volume measure has been multiplied by a smooth non-negative integrable density function, having Ricci curvature bounded from below by $K \in \R$ and dimension bounded from above by $N$ in a generalized sense, i.e. verifying the so called Curvature-Dimension condition $\CD(K,N)$ 
introduced in the 1980's by Bakry and \'Emery \cite{BakryStFlour,BakryEmery}.
E. Milman detected a model isoperimetric profile $\I^{\CD}_{K,N,D}$ 
such that if a Riemannian manifold with density verifying $\CD(K,N)$ 
has diameter at most $D > 0$, then the isoperimetric profile function of the weighted manifold is bounded from below by $\I^{\CD}_{K,N,D}$.

After the works of Cordero-Erausquin--McCann--Schmuckenshl\"ager \cite{corderomccann:brescamp}, Otto--Villani \cite{OttoVillaniHWI} and von Renesse--Sturm \cite{VonRenesseSturm}, 
it was realized that the $\CD(K,\infty)$ condition in the smooth setting may be equivalently formulated synthetically as a certain convexity property of an entropy functional along $W_2$ Wasserstein geodesics (associated to $L^2$-Optimal-Transport). This idea led  Lott--Villani \cite{Lott-Villani09} and 
Sturm \cite{Sturm06I, Sturm06II}, to propose a successful (and compatible with the classical one) synthetic definition of $\CD(K,N)$ for a general (complete, separable) metric space $(X,\sfd)$ endowed with a (locally-finite Borel) reference measure $\mm$ (``metric-measure space", or m.m.s.); the theory of m.m.s.'s verifying $\CD(K,N)$ has then extensively developed 
leading to a rich and fruitful approach to the geometry of m.m.s.'s by means of Optimal-Transport \cite{AGS11a, AGS11b, AGS12,Gigli12,EKS2013,AMS2013,MondinoNaber,GigliPasqualetto,KellMondino,BrueSemola}. See also \cite{AmbrosioICM} for a recent account on the topic.
 
Building on the work by Klartag \cite{klartag} and the localization paradigm developed by Payne--Weinberger \cite{PayneWeinberger}, Gromov--Milman \cite{Gromov-Milman} and Kannan--Lov\'asz--Simonovits \cite{KLS}, the first author with Mondino \cite{CM1} managed to extend L\'evy-Gromov-Milman isoperimetric inequality to the class of \emph{essentially non-branching} (see Section \ref{S:backgrounds} for the definition) m.m.s.'s verifying $\CD(K,N)$ with $\mm(X) = 1$; in particular \cite{CM1} proves that 
\begin{equation}\label{E:isoCD}
\mm^{+}(A) \geq \I_{K,N,D}^{\CD}(\mm(A)),
\end{equation}
whenever $A\subset X$ and 
\[
\mm^{+}(A)= \liminf_{\varepsilon \to 0} \frac{\mm(A^{\varepsilon})-\mm(A)}{\varepsilon},
\]
is the Minkowski content of $A$ and $A^{\varepsilon}$ is the $\varepsilon$-enlargement of $A$ given by $A^{\varepsilon}= \{ x \in X \colon \sfd(x,A) < \ve \}$.
The isoperimetric inequality \eqref{E:isoCD} is equivalent to the following inequality
$$
\I_{(X,\sfd,\mm)}(v)\geq \I_{K,N,D}^{\CD}(v),
$$ 
for all $v \in (0,1)$.
Here $\I_{(X,\sfd,\mm)}$ denotes the isoperimetric profile function of the m.m.s. 
$(X,\sfd,\mm)$ defined as follows
$$
\I_{(X,\sfd,\mm)}(v):= \inf\{\, \mm^+(A):\,  A\subset X \text{ Borel},\, \mm(A)=v \}.
$$
In some cases, given a one-dimensional density $h$ defined on the real interval $(a,b)$  integrating to $1$, we will adopt the shorter notation $\I_{h}$ to denote 
the isoperimetric profile function $\I_{\left((a,b), |\cdot|,h\L^{1}\right)}$.

\subsection{Isoperimetric inequality under Measure-Contraction property}

The Measure Contraction Property $\MCP(K,N)$ was introduced independently by Ohta in \cite{Ohta1} and Sturm in \cite{Sturm06II} as a weaker variant of $\CD(K,N)$. 
Roughly, the idea is to only require the $\CD(K,N)$ condition to hold not for any couple of probability measures $\mu_{0},\mu_{1}$ absolutely continuous with respect to the reference measure $\mm$, but when $\mu_1$ degenerates to a delta-measure at $o \in \supp(\mm)$. 

Still retaining a weaker synthetic lower bound on the Ricci curvature, an upper bound on the dimension and stability in the measured Gromov-Hausdorff sense (see also \cite{OhtaJLMS} for further properties), $\MCP(K,N)$ includes a larger family of spaces than $\CD(K,N)$.
It is now well known for instance that the Heisenberg group equipped with a left-invariant measure, which is the simplest sub-Riemannian structure, does not satisfy any form of $\CD(K,N)$ and do satisfy $\MCP(0,N)$ for a suitable choice of $N$, see \cite{Juillet}. It is worth mentioning that $\MCP$ was first investigated in Carnot groups in \cite{Juillet,Rifford}, see also \cite{BarilariRizzi}.

Recently, interpolation inequalities \`a la 
Cordero-Erausquin--McCann--Schmuckenshl\"ager \cite{corderomccann:brescamp}
have been obtained, under suitable modifications, by Barilari and Rizzi 
\cite{BarilariRizzi2} in the 
ideal sub-Riemannian setting and by Balogh, Krist‡ly and Sipos \cite{Balogh} for the Heisenberg group.
As a consequence, an increasing number of examples of spaces verifying $\MCP$ and 
not $\CD$ is at disposal, e.g.
the Heisenberg group, generalized H-type groups, the Grushin plane and Sasakian structures (for more details, see \cite{BarilariRizzi2}). 
In all the previous examples a sharp isoperimetric inequality 
is not at disposal yet;   
due to lack of regularity of minimizers, sharp isoperimetric inequality has been proved for subclasses of competitors having extra regularity or additional symmetries; in particular, Pansu Conjecture \cite{Pansu} is still unsolved. 
For more details we refer to \cite{Monti,Ritore1,Ritore2,Capogna} and references therein.

\smallskip
In this paper we address the isoperimetric inequality \`a la L\'evy-Gromov
within the class of spaces verifying $\MCP$. In particular, we identify a family of 
one-dimensional $\MCP(K,N)$-densities, each for every choice of $K,N$, volume $v$ and  diameter $D$, not verifying $\CD(K,N)$, and having optimal perimeter; we thus call
the optimal perimeter $\I_{K,N,D}(v)$ and obtain the main result of this note.

\begin{theorem}\label{T:ISOMCP}[Theorem \ref{teo:main}]
Let $K,N\in \R$ with $N>1$ and  
let $(X,\sfd,\mm)$ be an essentially non-branching m.m.s. verifying $\MCP(K,N)$ with 
$\mm(X) = 1$ and having diameter less than $D$. \\
For any $A\subset X$, 
\begin{equation}\label{E:ISOMCP}
\mm^{+}(A) \geq \I_{K,N,D}(\mm(A)).
\end{equation}
Moreover \eqref{E:ISOMCP} is sharp, i.e. for each $v \in [0,1]$, $K,N,D$ there exists 
a m.m.s. $(X,\sfd,\mm)$ with $\mm(X)=1$ and $A\subset X$ 
with $\mm(A) = v$ such that \eqref{E:ISOMCP} is an equality. \\
Finally for each $K,N,D$ and $v \in (0,1)$, $\I_{K,N,D}(v) < \I^{\CD}_{K,N,D}(v)$.
\end{theorem}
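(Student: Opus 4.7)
The plan is to follow the $L^1$-Optimal Transport localization strategy used in \cite{CM1} for the $\CD(K,N)$ case, and adapt it to the weaker $\MCP(K,N)$ setting. Given any Borel $A \subset X$ with $v:=\mm(A) \in (0,1)$, consider the signed balanced function $f := \chi_A - v$, which has zero $\mm$-mean. Using that $(X,\sfd,\mm)$ is essentially non-branching and satisfies $\MCP(K,N)$, a localization theorem (the $\MCP$ analogue of the one used in \cite{CM1}) produces a disintegration of $\mm$ along geodesic transport rays $\{X_q\}_{q \in Q}$,
\[
\mm = \int_Q \mm_q \, \q(dq), \qquad \mm_q = h_q \, \haus^1 \llcorner X_q,
\]
in which each $h_q$ is a one-dimensional $\MCP(K,N)$-density on an interval of length at most $D$, and the balance condition $\mm_q(A) = v$ holds for $\q$-a.e.\ $q\in Q$.

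Next I would localize the Minkowski content along needles, deriving
\[
\mm^+(A) \;\geq\; \int_Q \mm_q^+(A \cap X_q) \, \q(dq),
\]
by the standard lower semicontinuity argument, exploiting that the Kantorovich potential producing the decomposition is $1$-Lipschitz. Combined with the definition
\[
\I_{K,N,D}(v) \;:=\; \inf\{\,\I_h(v) : h \text{ is a 1D } \MCP(K,N)\text{-density on an interval of length}\leq D\,\},
\]
this reduces \eqref{E:ISOMCP} to the one-dimensional bound $\I_h(v) \geq \I_{K,N,D}(v)$ for every admissible $h$, which is tautological from the definition. Thus the whole theorem is shifted onto the one-dimensional model.

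The heart of the proof is the analysis of this one-dimensional model. I would first record that 1D $\MCP(K,N)$-densities $h$ on $(a,b)$ are characterized by a one-sided comparison of the form $h(\gamma_s)^{1/(N-1)}$ bounded below in terms of $h$ at a single chosen endpoint via the distortion coefficients of $\MCP$, in contrast with the symmetric two-sided interpolation forced by $\CD(K,N)$. I would then solve the extremal problem for $\I_{K,N,D}(v)$ explicitly: by monotone rearrangement the minimizing subset at a fixed density is a half-line sublevel set $(a,c)$ with $\int_a^c h = v$, and one can then optimize among densities by pushing mass toward one extreme of the interval as aggressively as the asymmetric $\MCP$ constraint allows. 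This yields a concrete one-parameter family of extremizers $h^*_{K,N,D,v}$. Sharpness of \eqref{E:ISOMCP} is then immediate by taking $X=(a,b)$ endowed with $h^*_{K,N,D,v}\,\L^1$ and $A=(a,c)$, so that equality holds by construction.

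Finally, the strict inequality $\I_{K,N,D}(v) < \I^{\CD}_{K,N,D}(v)$ follows because the extremal density $h^*_{K,N,D,v}$ exploits exactly the degree of one-sided freedom that $\MCP(K,N)$ permits but $\CD(K,N)$ forbids, typically manifesting as a sharp decay (or jump) at one endpoint of the needle; so $h^*_{K,N,D,v}$ is $\MCP$ but not $\CD$, and the infimum over the strictly larger class is strictly smaller at every $v\in(0,1)$. The main obstacle I anticipate is the third step: pinning down the 1D $\MCP(K,N)$-densities with enough precision to solve the variational problem in closed form and to verify that the minimizer indeed fails $\CD(K,N)$. The asymmetry of $\MCP$ forces a genuine optimization over the orientation of each needle, which has no counterpart in the $\CD$ setting and makes the explicit form of $\I_{K,N,D}$ more intricate than that of $\I^{\CD}_{K,N,D}$.
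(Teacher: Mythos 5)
Your overall strategy matches the paper's: localization via $L^1$-optimal transport reduces the statement to a one-dimensional isoperimetric problem for $\MCP(K,N)$-densities, and sharpness is established via an explicit extremal density. The localization step and the lower-semicontinuity argument for the Minkowski content are essentially what the paper does in the proof of Theorem~\ref{teo:main}. However, the core one-dimensional analysis contains two genuine gaps.

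First, the claim that ``by monotone rearrangement the minimizing subset at a fixed density is a half-line sublevel set'' is not justified and is false for a general $\MCP(K,N)$-density $h$: since $h$ need not be (log-)concave, the isoperimetric minimizer for $h$ at small volume can well be a compactly contained interval sitting in a valley of $h$, not a half-line. The paper cannot and does not assume this; instead, the proof of Theorem~\ref{T:comparison} must handle an arbitrary finite union $\cup_i [a_i,b_i]$ and proceeds through a four-case analysis governed by the positions of $a_i,b_i$ relative to $a_v$ and $D-a_v$, using the monotonicity of the auxiliary function $A_{K,N,D}$ (Lemma~\ref{L:Aincreasing}) and the pointwise lower bound $h\geq f_{K,N,D}$ (Proposition~\ref{P:lower}). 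Your shortcut skips exactly the nontrivial part of the one-dimensional argument. Relatedly, you do not address the dependence on the diameter: because $\widetilde{\I}_{K,N,D}$ is strictly decreasing in $D$ only for $K\leq 0$ and not for $K>0$, the explicit formula \eqref{E:ISOexplicit} requires the scaling argument of Lemma~\ref{lem:decreasingD}; absorbing this into the definition of $\I_{K,N,D}$ as you do hides the issue but does not solve it.

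Second, the strict inequality $\I_{K,N,D}(v)<\I^{\CD}_{K,N,D}(v)$ does not follow merely from the observation that the extremizer $h^*_{K,N,D,v}$ satisfies $\MCP$ but not $\CD$. That only yields $\leq$: a priori some other $\CD(K,N)$-density could still attain the same infimum. To upgrade to strict inequality one needs a rigidity statement identifying \emph{all} minimizers of the one-dimensional $\MCP$ problem, which the paper supplies in Corollary~\ref{C:lowerboundrigid} and Theorem~\ref{T:rigidityonedim}: equality forces the density to be $h_{a_v}$ or $h_{a_{1-v}}$, and then Lemma~\ref{lem:nocd} (these densities are never $\CD(K,N)$ except in the trivial maximal-diameter spherical case) gives the contradiction. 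This rigidity step is absent from your argument and is indispensable.
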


Via localization paradigm for $\MCP$-spaces (see Section \ref{S:final} for details), following \cite{klartag,CM1}, the proof of Theorem \ref{T:ISOMCP}
is reduced to the proof of the corresponding statement in the one-dimensional setting.
However, contrary to the $\CD$ framework, due to lack of any form of concavity, 
the isoperimetric problem 
for a general one-dimensional $\MCP(K,N)$-density seems to be out of reach.
We instead directly exhibit, for each $K,N,D$ and $v$, an optimal one-dimensional $\MCP(K,N)$-density, 
denoted by $h_{K,N,D,v}$ that will be optimal only for that choice of $K,N,D$ and $v$.
In particular, 
\begin{equation}\label{E:ISOexplicit}
\I_{K,N,D}(v) = 
\begin{cases}
h_{K,N,D,v}(a_{K,N,D}(v)), & K \leq 0,  \\
{\displaystyle \min_{D'\leq D}h_{K,N,D',v}(a_{K,N,D'}(v))}, & K>0. 
\end{cases}
\end{equation}
where $a_{K,N,D}(v)$ is the unique point of $[0,D]$ such that 
$\int_{[0,a_{K,N,D}(v)]}h_{K,N,D,v}(x)\,dx = v$; in particular 
$$
\I_{h_{K,N,D,v}}(v) = h_{K,N,D,v}(a_{K,N,D}(v)),
$$
for all $K,N,D$ and $v$.
To explain \eqref{E:ISOexplicit}, we underline that for each $K,N,D$ and $v$, 
$h_{K,N,D,v}(a_{K,N,D}(v))$ is the optimal perimeter when minimization is 
constrained to all one-dimensional $\MCP(K,N)$-densities (integrating to 1) 
having support of \emph{exactly}  length $D$, see Theorem \ref{T:comparison}. 
Denoting the optimal value of the latter minimization problem by 
$\widetilde{\I}_{K,N,D}(v)$, the previous sentence reads as 
\begin{equation}\label{E:onediso}
\widetilde{\I}_{K,N,D}(v)= \I_{h_{K,N,D,v}}(v). 
\end{equation}
Hence 
\eqref{E:ISOexplicit} is a direct consequence of the following fact: 
$\widetilde{\I}_{K,N,D}(v)$ is strictly decreasing as a function of $D$ only if $K\leq0$, showing a remarkable difference with the $\CD$-framework (see \cite{Mil}).

\smallskip
The rigidity property of L\'evy-Gromov isoperimetric inequality is a well-known fact: if a Riemannian manifold verifies the equality case in \eqref{E:LGiso} then it is isometric to the round sphere of the correct dimension \cite{Gro}; if equality is attained in \eqref{E:isoCD} and the metric measure space verifies the stronger $\RCD(K,N)$ condition (see \cite{AGS11a, AGS11b,Gigli12,EKS2013,AMS2013} and references therein), then it is isomorphic in the metric-measure sense to a spherical suspension (see \cite{CM1} for details). 
At the present generality, i.e. the class of m.m.s.'s verifying $\MCP(K,N)$, 
competitors are less regular and a weaker rigidity is valid. 

In particular, the proof of Theorem \ref{T:comparison} is sufficiently stable to imply one-dimensional rigidity (Theorem \ref{T:rigidityonedim}), 
valid for each choice of $K,N,D$ and $v$. 
Building on this and on the monotonicity in $D$ of $\widetilde{\I}_{K,N,D}(v)$,
we show that whenever $K \leq 0$ the optimal metric measure space has a product structurein a measure theoretic sense (see Theorem \ref{T:rigidityfull} for the precise result).

\smallskip
We conclude the Introduction presenting the structure of the paper.
Section \ref{S:backgrounds} contains some basics on the theory of m.m.s.'s 
verifying synthetic lower bounds on Ricci curvature. Section \ref{S:oned}
proves the new main one-dimensional facts on $\MCP(K,N)$-densities; 
Section \ref{S:final} contains the main results of the paper and a general 
overview on localization technique.


\section{Backgrounds}\label{S:backgrounds}

A triple $(X,\sfd,\mm)$  is called  metric measure space (or m.m.s.)  if $(X,\sfd)$ a Polish space (i.e. a  complete and separable metric space) and $\mm$ is  a positive Radon measure over $X$; 
in this work however we will always assume $\mm(X)=1$.  

We denote by 
$$
\Geo(X) : = \{ \gamma \in C([0,1], X):  \sfd(\gamma_{s},\gamma_{t}) = |s-t| \sfd(\gamma_{0},\gamma_{1}), \text{ for every } s,t \in [0,1] \}.
$$
the space of constant speed geodesics. The metric space $(X,\sfd)$ is a \emph{geodesic space} if and only if for each $x,y \in X$ 
there exists $\gamma \in \Geo(X)$ so that $\gamma_{0} =x, \gamma_{1} = y$.
For complete geodesic spaces, local compactness is equivalent to properness (a metric space is proper if every closed ball is compact).

\smallskip

$\mathcal{P}(X)$ denotes the  space of all Borel probability measures over $X$ and with  $\mathcal{P}_{2}(X)$ the space of probability measures with finite second moment.
$\mathcal{P}_{2}(X)$ can be  endowed with the $L^{2}$-Kantorovich-Wasserstein distance  $W_{2}$ defined as follows:  for $\mu_0,\mu_1 \in \mathcal{P}_{2}(X)$,  set
\begin{equation}\label{eq:W2def}
  W_2^2(\mu_0,\mu_1) := \inf_{ \pi} \int_{X\times X} \sfd^2(x,y) \, \pi(dxdy),
\end{equation}
where the infimum is taken over all $\pi \in \mathcal{P}(X \times X)$ with $\mu_0$ and $\mu_1$ as the first and the second marginal.
The space $(X,\sfd)$ is geodesic  if and only if the space $(\mathcal{P}_2(X), W_2)$ is geodesic. 

\medskip

 For any $t\in [0,1]$,  let ${\rm e}_{t}$ denote the evaluation map: 
$$
  {\rm e}_{t} : \Geo(X) \to X, \qquad {\rm e}_{t}(\gamma) : = \gamma_{t}.
$$
Any geodesic $(\mu_t)_{t \in [0,1]}$ in $(\mathcal{P}_2(X), W_2)$  can be lifted to a measure $\nu \in {\mathcal {P}}(\Geo(X))$, 
so that $({\rm e}_t)_\sharp \, \nu = \mu_t$ for all $t \in [0,1]$. 
\\Given $\mu_{0},\mu_{1} \in \mathcal{P}_{2}(X)$, we denote by 
$\Opt(\mu_{0},\mu_{1})$ the space of all $\nu \in \mathcal{P}(\Geo(X))$ for which $({\rm e}_0,{\rm e}_1)_\sharp\, \nu$ 
realizes the minimum in \eqref{eq:W2def}. Such a $\nu$ will be called \emph{dynamical optimal plan}. If $(X,\sfd)$ is geodesic, then the set  $\Opt(\mu_{0},\mu_{1})$ is non-empty for any $\mu_0,\mu_1\in \mathcal{P}_2(X)$.
\\We will also consider the subspace $\mathcal{P}_{2}(X,\sfd,\mm)\subset \mathcal{P}_{2}(X)$
formed by all those measures absolutely continuous with respect with $\mm$.
\medskip

In the paper we will only consider essentially non-branching spaces, let us recall their definition (introduced in \cite{RS2014}). 

A set $G \subset \Geo(X)$ is a set of non-branching geodesics if and only if for any $\gamma^{1},\gamma^{2} \in G$, it holds:
$$
\exists \;  \bar t\in (0,1) \text{ such that } \ \forall t \in [0, \bar t\,] \quad  \gamma_{ t}^{1} = \gamma_{t}^{2}   
\quad 
\Longrightarrow 
\quad 
\gamma^{1}_{s} = \gamma^{2}_{s}, \quad \forall s \in [0,1].
$$
\begin{definition}\label{def:ENB}
A metric measure space $(X,\sfd, \mm)$ is \emph{essentially non-branching} (e.n.b. for short) if and only if for any $\mu_{0},\mu_{1} \in \mathcal{P}_{2}(X)$,
with $\mu_{0},\mu_{1}$ absolutely continuous with respect to $\mm$, any element of $\Opt(\mu_{0},\mu_{1})$ is concentrated on a set of non-branching geodesics.
\end{definition}
It is clear that if $(X,\sfd)$ is a  smooth Riemannian manifold  then any subset $G \subset \Geo(X)$  is a set of non-branching geodesics, in particular any smooth Riemannian manifold is essentially non-branching.

It is worth stressing that 
the restriction to essentially non-branching m.m.s.'s is done to avoid pathological cases:
as an example of possible pathological behaviour we mention the failure of the local-to-global property of $\CD(K,N)$ within this class of spaces; in particular, a heavily-branching m.m.s. verifying $\CD_{loc}(0,4)$ which does not verify $\CD(K,N)$ for any fixed $K \in \R$ and $N \in [1,\infty]$ was constructed by Rajala in \cite{R2016}, while the local-to-global property of $\CD(K,N)$
has been recently proved to hold \cite{CMi16} for essentially non-branching m.m.s.'s.


\subsection{Measure-Contraction Property}

We briefly describe the $\MCP$ condition encapsulating generalized 
Ricci curvature lower bounds coupled 
with generalized dimension upper bounds.

\begin{definition}[$\sigma_{K,\NN}$-coefficients] \label{def:sigma}
Given $K \in \Real$ and $\NN \in (0,\infty]$, define:
\[
D_{K,\NN} := \begin{cases}  \frac{\pi}{\sqrt{K/\NN}}  & K > 0 \;,\; \NN < \infty \\ +\infty & \text{otherwise} \end{cases} .
\]
In addition, given $t \in [0,1]$ and $0 < \theta < D_{K,\NN}$, define:
\[
\sigma^{(t)}_{K,\NN}(\theta) :=  
\begin{cases}   
\frac{\sin(t \theta \sqrt{\frac{K}{\NN}})}{\sin(\theta \sqrt{\frac{K}{\NN}})}  & K > 0 \;,\; \NN < \infty \\
t & K = 0 \text{ or } \NN = \infty \\
 \frac{\sinh(t \theta \sqrt{\frac{-K}{\NN}})}{\sinh(\theta \sqrt{\frac{-K}{\NN}})} & K < 0 \;,\; \NN < \infty 
\end{cases} ,
\]
and set $\sigma^{(t)}_{K,\NN}(0) = t$ and $\sigma^{(t)}_{K,\NN}(\theta) = +\infty$ for $\theta \geq D_{K,\NN}$. 
Finally given $K \in \Real$ and $N \in (1,\infty]$, define:
\[
\tau_{K,N}^{(t)}(\theta) := t^{\frac{1}{N}} \sigma_{K,N-1}^{(t)}(\theta)^{1 - \frac{1}{N}} .
\]
When $N=1$, set $\tau^{(t)}_{K,1}(\theta) = t$ if $K \leq 0$ and $\tau^{(t)}_{K,1}(\theta) = +\infty$ if $K > 0$. 
\end{definition}

\begin{definition}[$\MCP(K,N)$] \label{D:Ohta1}
A m.m.s. $(X,\sfd,\mm)$ is said to satisfy $\MCP(K,N)$ if for any $o \in \supp(\mm)$ and  $\mu_0 \in \P_2(X,\sfd,\mm)$ of the form $\mu_0 = \frac{1}{\mm(A)} \mm\llcorner_{A}$ for some Borel set $A \subset X$ with $0 < \mm(A) < \infty$ (and with $A \subset B(o, \pi \sqrt{(N-1)/K})$ if $K>0$), there exists $\nu \in \Opt(\mu_0, \delta_{o} )$ such that:
\begin{equation} \label{eq:MCP-def}
\frac{1}{\mm(A)} \mm \geq (\ee_{t})_{\sharp} \big( \tau_{K,N}^{(1-t)}(\sfd(\gamma_{0},\gamma_{1}))^{N} \nu(d \gamma) \big) \;\;\; \forall t \in [0,1] .
\end{equation}
\end{definition}

If $(X,\sfd,\mm)$ is a m.m.s. verifying $\MCP(K,N)$, then $(\supp(\mm),\sfd)$  is Polish, proper and it is a geodesic space. 
With no loss in generality for our purposes we will assume that $X = \supp(\mm)$. 
Many additional results on the structure of $W_{2}$-geodesics can be obtained just from the $\MCP$ condition together with the essentially non-branching assumption (see \cite{CM16}).

To conclude, referring to \cite{Ohta1, Sturm06II} for more general results, we report the following important fact
\cite[Theorem 3.2]{Ohta1}: if $(M,g)$ is $n$-dimensional Riemannian manifold 
with $n\geq 2$, the m.m.s. $(M,d_{g},vol_{g})$ verifies $\MCP(K,n)$ if and only if $Ric_{g} \geq K g$, where $d_{g}$ is the geodesic distance induced by $g$ and $v_{g}$ the volume measure.


\medskip
A relevant case for our purposes (due to the crucial use of the localization technique)  is the one of one-dimensional spaces $(X,\sfd,\mm)=(I,|\cdot|, h \L^1)$.
It is a standard fact that the m.m.s. $(I,|\cdot|,h\L^{1})$ verifies $\MCP(K,N)$ if and only if the non-negative Borel function $h$ satisfies the following inequality: 
\begin{equation}\label{E:MCPdef}
 h(t x_1 + (1-t) x_0) \geq \sigma^{(1-t)}_{K,N-1}(\abs{x_1-x_0})^{N-1} h(x_0).
\end{equation}
for all $x_0,x_1 \in I$ and $t \in [0,1]$. We will call $h$ an 
$\MCP(K,N)$-density.

\smallskip
Inequality \eqref{E:MCPdef} implies several known properties that we recall for readers convenience.
To write them in a unified way, we 
define for $\kappa \in \erre$ the function $s_{\kappa} : [0,+\infty) \to \erre$ (on $[0,\pi/\sqrt{\kappa})$ if $\kappa>0$) 
\begin{equation}\label{E:sk}
s_{\kappa}(\theta):= 
\begin{cases}
(1/\sqrt{\kappa})\sin(\sqrt{\kappa}\theta) & {\rm if}\ \kappa>0, \crcr
\theta & {\rm if}\ \kappa=0, \crcr
(1/\sqrt{-\kappa})\sinh(\sqrt{-\kappa}\theta) &{\rm if}\ \kappa<0.
\end{cases}
\end{equation}
For the moment we confine ourselves to the case $I = (a,b)$ with $a,b \in \R$; hence
\eqref{E:MCPdef} implies (actually  is equivalent to)
\begin{equation}\label{E:MCPdef2}
\left( \frac{s_{K/(N-1)}(b - x_{1}  )}{s_{K/(N-1)}(b - x_{0}  )} \right)^{N-1} 
\leq \frac{h(x_{1} ) }{h (x_{0})} \leq 
\left( \frac{s_{K/(N-1)}( x_{1} -a  )}{s_{K/(N-1)}( x_{0} -a  )} \right)^{N-1}, 
\end{equation}
for $x_{0} \leq x_{1}$. 
In particular, $h$ is locally Lipschitz in the interior of $I$ and continuous up to the boundary.
The next lemma was stated and proved in  \cite[Lemma A.8]{CMi16}  under the $\CD$ condition; as the proof only uses $\MCP(K,N)$ we report it in this more general version.

\begin{lemma} \label{lem:apriori0}
Let $h$ denote a $\MCP(K,N)$ density on a finite interval $(a,b)$, $N \in (1,\infty)$, which integrates to $1$. Then:
\begin{equation}\label{eq:suphdiam}
\sup_{x \in (a,b)} h(x) \leq \frac{1}{b-a} \begin{cases} N & K \geq 0  \\ (\int_0^1 (\sigma^{(t)}_{K,N-1}(b-a))^{N-1} dt)^{-1} & K < 0 \end{cases} .
\end{equation}
In particular, for fixed $K$ and $N$, $h$ is uniformly bounded from above as long as $b-a$ is uniformly bounded away from $0$ (and from above if $K < 0$).
\end{lemma}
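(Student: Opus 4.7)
The plan is to reduce the supremum bound to a one-point argument. Since $h$ extends continuously to $[a,b]$ by \eqref{E:MCPdef2}, pick $x^{\ast} \in [a,b]$ with $h(x^{\ast}) = \sup h$ and apply the one-dimensional $\MCP(K,N)$ density inequality \eqref{E:MCPdef} with $x_{0} = x^{\ast}$ and $x_{1}$ taken in turn to be each of the endpoints $a$ and $b$ (by continuity of $h$ the boundary values are admissible, or else one may take $x_{1}$ slightly interior and pass to the limit). Parametrising $z = t x_{1} + (1-t) x^{\ast}$ and substituting $s = 1-t$, a direct change of variables yields
\begin{equation*}
1 = \int_{a}^{b} h(z)\, dz \;\geq\; h(x^{\ast}) \bigl[\phi(x^{\ast}-a) + \phi(b-x^{\ast})\bigr], \qquad \phi(\theta) := \theta \int_{0}^{1} \sigma^{(s)}_{K,N-1}(\theta)^{N-1}\, ds.
\end{equation*}
It therefore suffices to bound the bracketed quantity from below by the reciprocal of the constant appearing in \eqref{eq:suphdiam}.

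For $K \geq 0$ this is straightforward. The pointwise estimate $\sigma^{(s)}_{K,N-1}(\theta) \geq s$ holds trivially for $K=0$, and for $K>0$ it is the concavity of $\sin$ on $[0,\pi]$ applied to the chord from $0$ to $\theta\sqrt{K/(N-1)}$. Hence $\phi(\theta) \geq \theta/N$, the bracket is at least $(b-a)/N$, and $h(x^{\ast}) \leq N/(b-a)$ as required.

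The case $K<0$ is more delicate. Here I would establish the subadditivity $\phi(\alpha) + \phi(\beta) \geq \phi(\alpha+\beta)$ on $(0,\infty)$; applied to $\alpha = x^{\ast}-a$ and $\beta = b-x^{\ast}$ this delivers the stated bound $h(x^{\ast}) \leq \phi(b-a)^{-1}$. Subadditivity follows from $\theta \mapsto \phi(\theta)/\theta$ being nonincreasing, since then both $\phi(\alpha)/\alpha$ and $\phi(\beta)/\beta$ dominate $\phi(\alpha+\beta)/(\alpha+\beta)$ and a weighted average by $\alpha$ and $\beta$ returns $\phi(\alpha+\beta)$. Setting $c := \sqrt{-K/(N-1)}$, the required monotonicity reduces to the pointwise claim that, for each fixed $s \in (0,1)$, the map $\theta \mapsto \sinh(s\theta c)/\sinh(\theta c)$ is nonincreasing; a direct differentiation shows this is equivalent to $s \tanh(\theta c) \leq \tanh(s\theta c)$, which is precisely the concavity of $\tanh$ on $[0,\infty)$ combined with $\tanh(0)=0$. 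The closing ``in particular'' statement is then immediate from the explicit formula, using that $\sigma^{(s)}_{K,N-1}(b-a)$ stays uniformly bounded away from $0$ in $s$ provided $b-a$ stays in a compact subset of $(0,\infty)$ when $K<0$. The one genuinely nontrivial step is this reduction to $\tanh$-concavity; everything else is either routine integration or a straightforward convexity argument.
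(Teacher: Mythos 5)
Your proof is correct and follows the same standard line of argument that the paper itself defers to (the paper cites Lemma A.8 of Cavalletti--Milman without reproducing the proof, noting only that it uses just $\MCP$): integrate the pointwise $\MCP$ bound \eqref{E:MCPdef} from a maximum point $x^*$ over both halves of the interval to get $1\geq h(x^*)\bigl[\phi(x^*-a)+\phi(b-x^*)\bigr]$, then estimate $\phi(x^*-a)+\phi(b-x^*)\geq\phi(b-a)$ via $\sigma^{(s)}_{K,N-1}(\theta)\geq s$ when $K\geq 0$ and via subadditivity of $\phi$ (monotonicity of $\phi(\theta)/\theta$, reduced to concavity of $\tanh$) when $K<0$. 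Your self-contained derivation is sound; in particular the reduction of the $K<0$ monotonicity to $s\,\tanh(\theta c)\leq\tanh(s\theta c)$ is a clean way to close that step.
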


\bigskip


\section{One-dimensional analysis}\label{S:oned}

The isoperimetric problem for a one-dimensional density $h$ verifying $\MCP(K,N)$ for some $K,N\in \R$ and $N > 1$ will be addressed in this section.

Without loss of generality we can assume $h$ to be defined over $[0,D]$ (recall that $D\leq \pi \sqrt{(N-1)/K}$, whenever $K >0$). 
Recall that the case $K > 0$ and $D=\pi \sqrt{(N-1)/K}$ is trivial as 
\eqref{E:MCPdef2} forces the density to coincide with the model 
density $\sin^{N-1}(t)$ (that in particular is also a $\CD(K,N)$-density).

\begin{proposition}[Lower Bound]\label{P:lower}
Define the following strictly positive function
$$
 f_{K,N,D}(x) : = \left( 
\int_{(0,x)}\left( \frac{s_{K/(N-1)}(D -  y)}{s_{K/(N-1)}(D - x  )} \right)^{N-1}  dy 
+\int_{(x,D)} \left( \frac{s_{K/(N-1)}(y)}{s_{K/(N-1)}(x  )} \right)^{N-1} dy \right)^{-1}
$$
for $x \in (0,D)$ and equal $0$ for $x = 0,D$. Then 
\begin{itemize}
\item[i)] $f_{K,N,D}$ is strictly increasing over $(0,D/2)$;
\item[ii)] $f_{K,N,D}(x) = f_{K,N,D}(D- x)$;
\item[iii)] if $h : [0,D] \to \R$ is an $\MCP(K,N)$-density integrating to $1$, then $h(x) \geq f_{K,N,D}(x)$.
\end{itemize}
\end{proposition}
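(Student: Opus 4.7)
I would prove the three parts in the order (iii), (ii), (i), with (i) being the substantive step. Write $s:=s_{K/(N-1)}$ throughout.

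\emph{Part (iii)} follows directly from \eqref{E:MCPdef2}. Applying it with $(x_0,x_1)=(y,x)$ for $y<x$ (using the lower bound, rearranged as an upper bound on $h(y)$) and with $(x_0,x_1)=(x,y)$ for $y>x$ (using the upper bound) yields the pointwise estimates
$$
h(y)\leq h(x)\Big(\tfrac{s(D-y)}{s(D-x)}\Big)^{N-1}\ \ (y<x),\quad h(y)\leq h(x)\Big(\tfrac{s(y)}{s(x)}\Big)^{N-1}\ \ (y>x),
$$
and integrating in $y$ over $[0,D]$ with $\int h=1$ gives exactly $1\leq h(x)/f_{K,N,D}(x)$. \emph{Part (ii)} is the substitution $y\mapsto D-y$, which swaps the two integrals in the denominator of $f_{K,N,D}(x)^{-1}$, exhibiting $f_{K,N,D}(D-x)^{-1}$.

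For \emph{part (i)}, I would write $g(x):=f_{K,N,D}(x)^{-1}=A(x)+B(x)$ with
$$
A(x)=\frac{\int_0^x s(D-y)^{N-1}dy}{s(D-x)^{N-1}},\qquad B(x)=\frac{\int_x^D s(y)^{N-1}dy}{s(x)^{N-1}}.
$$
A direct differentiation produces $g'(x)=(N-1)\big[\tfrac{s'(D-x)}{s(D-x)}A(x)-\tfrac{s'(x)}{s(x)}B(x)\big]$, which combined with the identity $A(x)=B(D-x)$ coming from (ii) rewrites as $g'(x)=(N-1)[F(D-x)-F(x)]$ with $F(t):=(s'(t)/s(t))\,B(t)$. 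So (i) reduces to the comparison $F(D-x)<F(x)$ for all $x\in(0,D/2)$. I would verify this by computing $(\log F)'$ using the ODE $s''=-(K/(N-1))\,s$:
$$
\tfrac{F'(t)}{F(t)}=-\tfrac{(K/(N-1))\,s(t)^2+N\,s'(t)^2}{s(t)\,s'(t)}-\tfrac{s(t)^{N-1}}{\int_t^D s(y)^{N-1}\,dy},
$$
in which the numerator $(K/(N-1))s^2+N(s')^2$ is always strictly positive (immediate for $K\geq 0$; for $K<0$ it equals $N+(N-1)\sinh^2(\sqrt{-K/(N-1)}\,t)>0$). Hence $F'<0$ on any subinterval where $s'>0$.

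The main technical obstacle is the case $K>0$ with $D$ exceeding $t^*:=\pi/(2\sqrt{K/(N-1)})$, where $s'$ changes sign on $(0,D)$. Here the standing assumption $D<\pi/\sqrt{K/(N-1)}$ (the borderline case $D=\pi/\sqrt{K/(N-1)}$ being trivial as remarked before the statement) gives $t^*>D/2$, so for $x\in(0,D/2)$ one still has $x<t^*$ and therefore $F(x)>0$. If moreover $D-x\leq t^*$, then $x$ and $D-x$ both lie in $(0,t^*)$ and the strict monotonicity of $F$ on $(0,t^*)$ established above yields $F(D-x)<F(x)$; if instead $D-x>t^*$, then $s'(D-x)\leq 0$ forces $F(D-x)\leq 0<F(x)$. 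In either case $g'(x)<0$ on $(0,D/2)$, so $f_{K,N,D}$ is strictly increasing there, completing (i).
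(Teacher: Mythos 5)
Your parts (ii) and (iii) follow exactly the paper's argument: (ii) by the substitution $y\mapsto D-y$, (iii) by integrating \eqref{E:MCPdef2}. Part (i), however, is where you take a genuinely different — and arguably cleaner — route. The paper proves (i) indirectly: it observes that $f_{K,N,D}$ is smooth, vanishes at $0$ and $D$, is positive in between and symmetric, so it suffices to show the critical-point equation $f'_{K,N,D}(x)=0$ has no solution on $(0,D/2)$; this is then ruled out by contradiction, with a separate elementary trigonometric or hyperbolic estimate for each of the three regimes $K=0$, $K<0$, $K>0$ (the last one requiring a discussion of the sign of $\cos$). You instead differentiate $g=f_{K,N,D}^{-1}=A+B$ directly, use the symmetry $A(x)=B(D-x)$ to collapse $g'(x)=(N-1)[F(D-x)-F(x)]$ with a single comparison function $F(t)=\tfrac{s'(t)}{s(t)}B(t)$, and then establish $F'<0$ wherever $s'>0$ by computing $(\log F)'$ and invoking the ODE $s''=-\tfrac{K}{N-1}s$. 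The key quantity $\tfrac{K}{N-1}s^2+N(s')^2$ is indeed strictly positive in all three regimes ($N$, $1+(N-1)\cos^2$, $N+(N-1)\sinh^2$, up to rescaling), so the argument is unified rather than case-split; the only residual case analysis is the geometric one for $K>0$ and $D>t^*$, which you resolve correctly by noting $t^*>D/2$ and treating the sign of $F(D-x)$. Your derivation of $g'$ and of $(\log F)'$ checks out. What the paper's route buys is brevity and the avoidance of the ODE computation; what your route buys is a direct sign computation (so one does not need to invoke the separate argument that nonvanishing of $f'$ plus continuity and boundary behaviour pins down the sign), and a single formula replacing three ad hoc trigonometric contradictions.
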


\begin{proof}
The second claim is straightforward to check. 
For the first one, being $f_{K,N,D}$ a smooth function and strictly positive in $(0,D)$, it will be enough to show that $f'_{K,N,D} (x) = 0$ has no solution for 
$x \in (0,D/2)$. Imposing $f'_{K,N,D} (x) = 0$ is equivalent to 
$$
\frac{s_{K/(N-1)}'(D-x)}{s_{K/(N-1)}^{N}(D-x)} \int_{(0,x)} s_{K/(N-1)}^{N-1}(D-y)\, dy
=
\frac{s_{K/(N-1)}'(x)}{s_{K/(N-1)}^{N}(x)} \int_{(x,D)} s_{K/(N-1)}^{N-1}(y) \, dy,
$$
that can be rewritten as 
$$
\frac{s_{K/(N-1)}'(D-x)}{s_{K/(N-1)}^{N}(D-x)} \int_{(D-x,D)} s_{K/(N-1)}^{N-1}(y)\, dy
=
\frac{s_{K/(N-1)}'(x)}{s_{K/(N-1)}^{N}(x)} \int_{(x,D)} s_{K/(N-1)}^{N-1}(y) \, dy.
$$
Since $D-x \geq x$, the previous identity implies 
\begin{equation}\label{E:aux1}
\frac{|s_{K/(N-1)}'(D-x)|}{s_{K/(N-1)}^{N}(D-x)} > \frac{|s_{K/(N-1)}'(x)|}{s_{K/(N-1)}^{N}(x)}. 
\end{equation}
For $K = 0$, \eqref{E:aux1} becomes 
$$
\frac{1}{(D-x)^{N}} > \frac{1}{x^{N}}, 
$$
giving a contradiction. For negative $K = - (N-1)$ (the other negative cases follow similarly) \eqref{E:aux1} implies
$$
\frac{\cosh(D-x)}{\sinh(D-x)^{N}} > \frac{\cosh(x)}{\sinh(x)^{N}}
$$
forcing 
$$
\frac{\cosh(D-x)}{\sinh(D-x)} > \left(\frac{\sinh(D-x)}{\sinh(x)}\right)^{N-1} \frac{\cosh(x)}{\sinh(x)} >\frac{\cosh(x)}{\sinh(x)}, 
$$
giving a contradiction with monotonicity of $\tanh$. 
Finally, for $K = N-1$, \eqref{E:aux1} becomes
$$
\frac{\cos(D-x)}{\sin(D-x)^{N}} > \frac{\cos(x)}{\sin(x)^{N}}, 
\qquad \sgn(\cos(D-x)) = \sgn (\cos(x));
$$
the second identity implies that $x < D-x < \pi/2$ or $\pi/2 < x < D - x$.
The second case would imply that $D > 2x > \pi$ giving a contradiction. Hence we are left with $x < D-x < \pi/2$: 
$$
1 > \frac{\cos(D-x)}{\cos x} > \left(\frac{\sin(D-x)}{\sin x}\right)^{N}, 
$$
giving a contradiction.

The third claim follows simply observing that \eqref{E:MCPdef2} gives 
\begin{align*}
1 = &~ \int_{(0,x)} h(y) \, dy +\int_{(x,D)} h(y) \, dy   \crcr
\leq &~ \frac{h(x)}{s_{K/(N-1)}^{N-1}(D-x)}\int_{(0,x)} s_{K/(N-1)}^{N-1}(D-y) \, dy +\frac{h(x)}{s_{K/(N-1)}^{N-1}(x)}\int_{(x,D)} s_{K/(N-1)}^{N-1}(y) \, dy,   
\end{align*}
and the claim is proved.
\end{proof}

Starting from the lower bound of Proposition \ref{P:lower},
we define a distinguished family of $\MCP(K,N)$ densities, depending on four parameters, that will be the model one-dimensional isoperimetric density: 
\begin{equation}
\label{equ:ha}
h_{K,N,D}^{a}(x) : = 
f_{K,N,D}(a)
\begin{cases}
{\displaystyle \left(\frac{s_{K/(N-1)}(D-x)}{s_{K/(N-1)}(D-a)}\right)^{N-1}}, 
		& x \leq a, \crcr \\
{\displaystyle \left(\frac{s_{K/(N-1)}(x)}{s_{K/(N-1)}(a)}\right)^{N-1}}, 
		& x \geq a.
\end{cases}
\end{equation}
Notice that $h_{K,N,D}^{D-a}(D-x) = h_{K,N,D}^{a}(x)$ and 
$$
h_{K,N,D}^{a} (z D/D') = h_{(D/D')^{2}K,N,D'}^{aD'/D}(z),
$$ 
showing that it will no be restrictive to assume for some of the next proofs 
$K = N-1$ or $K = -(N-1)$, letting $D$ vary.

\begin{corollary}[Rigidity of lower bound]\label{C:lowerboundrigid}
Let $h : [0,D] \to \R$ be a $\MCP(K,N)$-density integrating to $1$. 
Assume $h(y) = f_{K,N,D}(y)$ for some $y \in (0,D)$; then 
$h = h^{y}_{K,N,D}$.
\end{corollary}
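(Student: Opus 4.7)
The plan is to revisit the proof of Proposition \ref{P:lower}(iii) and extract the equality cases. Recall that the bound $h(y) \geq f_{K,N,D}(y)$ was obtained from the chain
\begin{equation*}
1 = \int_{(0,y)} h(x)\,dx + \int_{(y,D)} h(x)\,dx \leq \frac{h(y)}{s_{K/(N-1)}^{N-1}(D-y)}\int_{(0,y)} s_{K/(N-1)}^{N-1}(D-x)\,dx + \frac{h(y)}{s_{K/(N-1)}^{N-1}(y)}\int_{(y,D)} s_{K/(N-1)}^{N-1}(x)\,dx,
\end{equation*}
where the two pointwise bounds on $h(x)$ came directly from \eqref{E:MCPdef2}, applied with $(x_0,x_1)=(x,y)$ on $(0,y)$ and with $(x_0,x_1)=(y,x)$ on $(y,D)$. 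The assumption $h(y)=f_{K,N,D}(y)$ forces equality throughout this chain.

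First I would argue that equality in the integrated estimate forces pointwise equality in the integrand bounds almost everywhere. More precisely, on $(0,y)$
\begin{equation*}
h(x) = h(y)\left(\frac{s_{K/(N-1)}(D-x)}{s_{K/(N-1)}(D-y)}\right)^{N-1} \qquad \text{for a.e.\ } x\in(0,y),
\end{equation*}
and on $(y,D)$
\begin{equation*}
h(x) = h(y)\left(\frac{s_{K/(N-1)}(x)}{s_{K/(N-1)}(y)}\right)^{N-1} \qquad \text{for a.e.\ } x\in(y,D).
\end{equation*}
Both sides are continuous in $x$ (recall that \eqref{E:MCPdef2} implies local Lipschitz regularity of $h$ on the interior of $[0,D]$), hence these identities hold for every $x \in [0,D]$. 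Comparing with the explicit formula \eqref{equ:ha} and using $h(y) = f_{K,N,D}(y)$, this is precisely the statement $h \equiv h^{y}_{K,N,D}$.

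As an alternative (and cleaner) way to conclude, one observes that the same two comparison inequalities also apply, with equality, to the density $h^{y}_{K,N,D}$, and that by the very definition of $f_{K,N,D}(y)$ the density $h^{y}_{K,N,D}$ integrates to $1$ over $[0,D]$. Thus $h$ and $h^{y}_{K,N,D}$ are two non-negative continuous functions on $[0,D]$ both integrating to $1$, agreeing at $x=y$, and satisfying $h(x) \leq h^{y}_{K,N,D}(x)$ pointwise; they must coincide.

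I do not expect any genuine obstacle here: the content of the corollary is the rigidity contained in the single chain of inequalities used in the proof of Proposition \ref{P:lower}(iii). The only minor care needed is the upgrade from almost-everywhere equality to everywhere equality, which is handled by the continuity of $h$ granted by \eqref{E:MCPdef2}.
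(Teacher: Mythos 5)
Your proof is correct and follows essentially the same approach as the paper: equality in the single chain of inequalities from Proposition~\ref{P:lower}(iii) forces the pointwise identities on $(0,y)$ and $(y,D)$ via \eqref{E:MCPdef2}, which the paper records more tersely (with $a$ in place of $y$ in the displayed formula, a typo). Your alternative argument via $h\leq h^{y}_{K,N,D}$ pointwise with both integrating to one is a clean repackaging of the same rigidity and is a nice touch.
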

\begin{proof}
From the proof Proposition \ref{P:lower}, point $iii)$, and \eqref{E:MCPdef2}
one deduces that 
$$
h(x) = 
h(y)
\begin{cases}
{\displaystyle \left(\frac{s_{K/(N-1)}(D-x)}{s_{K/(N-1)}(D-a)}\right)^{N-1}}, 
		& x \leq a, \crcr \\
{\displaystyle \left(\frac{s_{K/(N-1)}(x)}{s_{K/(N-1)}(a)}\right)^{N-1}}, 
		& x \geq a.
\end{cases}
$$
The claim then follows.
\end{proof}

To avoid cumbersome notation, the dependence of $h_{K,N,D}^{a}$ on $K,N,D$ will be omitted and we will use $h_{a}$.

\begin{lemma}\label{L:specialdensity}
For every $a \in (0,D)$, the function $h_{a}$ integrates to $1$ and it is an $\MCP(K,N)$-density.
\end{lemma}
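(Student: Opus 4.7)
The plan is to verify the two statements separately.

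For the integration identity $\int_0^D h_a = 1$, I would just unfold the piecewise definition:
\[
\int_0^D h_a(x)\,dx = f_{K,N,D}(a)\left(\int_0^a \left(\frac{s_{K/(N-1)}(D-x)}{s_{K/(N-1)}(D-a)}\right)^{N-1}\!dx + \int_a^D \left(\frac{s_{K/(N-1)}(x)}{s_{K/(N-1)}(a)}\right)^{N-1}\!dx\right),
\]
and observe that the bracket equals $f_{K,N,D}(a)^{-1}$ by the very definition of $f_{K,N,D}$ in Proposition~\ref{P:lower}.

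For the $\MCP(K,N)$ property, I would exploit the equivalent sandwich reformulation \eqref{E:MCPdef2}: for $0\leq x_0\leq x_1 \leq D$ one has to check
\[
\left(\frac{s_{K/(N-1)}(D-x_1)}{s_{K/(N-1)}(D-x_0)}\right)^{N-1} \leq \frac{h_a(x_1)}{h_a(x_0)} \leq \left(\frac{s_{K/(N-1)}(x_1)}{s_{K/(N-1)}(x_0)}\right)^{N-1}.
\]
I would split into three cases according to the position of $a$. If $x_1 \leq a$, both points lie in the first branch of \eqref{equ:ha}, so $h_a(x_1)/h_a(x_0)$ saturates the lower bound exactly, while the upper bound is trivial since the ratio is $\leq 1$ and the right-hand side is $\geq 1$. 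If $x_0 \geq a$ the situation is symmetric and the upper bound is saturated by the second branch.

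The essential case is $x_0 \leq a \leq x_1$, where a direct substitution gives
\[
\frac{h_a(x_1)}{h_a(x_0)} = \left(\frac{s_{K/(N-1)}(x_1)\,s_{K/(N-1)}(D-a)}{s_{K/(N-1)}(D-x_0)\,s_{K/(N-1)}(a)}\right)^{N-1}.
\]
Here I would isolate the auxiliary function $\varphi(t):=s_{K/(N-1)}(D-t)/s_{K/(N-1)}(t)$ on $(0,D)$, and check—via the addition formulas for $\sin$ and $\sinh$, or directly in the flat case—the clean identity $s_{K/(N-1)}(D-t)s'_{K/(N-1)}(t)+s'_{K/(N-1)}(D-t)s_{K/(N-1)}(t)=s_{K/(N-1)}(D)$, which gives $\varphi'(t) = -s_{K/(N-1)}(D)/s_{K/(N-1)}(t)^2 < 0$ (strict negativity being valid exactly in the assumed range $D\leq \pi\sqrt{(N-1)/K}$ when $K>0$). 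The upper sandwich inequality then reduces to $\varphi(x_1)\leq \varphi(a)$ (from $x_1\geq a$) and the lower one to $\varphi(a)\leq \varphi(x_0)$ (from $x_0\leq a$), both immediate from monotonicity of $\varphi$. The main—essentially the only—obstacle is this straddling case; once $\varphi$ is recognized as monotone, everything else is bookkeeping.
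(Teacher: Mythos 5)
Your proof is correct and follows essentially the same route as the paper: verify \eqref{E:MCPdef2} via the same three-case split on the position of $a$, with the straddling case reduced to the monotonicity of $t\mapsto s_{K/(N-1)}(D-t)/s_{K/(N-1)}(t)$. The only (welcome) variation is that you compute the derivative of this auxiliary function in closed form via the addition identity $s_{K/(N-1)}(D-t)s'_{K/(N-1)}(t)+s'_{K/(N-1)}(D-t)s_{K/(N-1)}(t)=s_{K/(N-1)}(D)$, whereas the paper checks sign by a brief case analysis on $K$; both yield the same conclusion.
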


\begin{proof}
Each $h_{a}$ has by definition integral $1$. 
To check $\MCP(K,N)$ it will be enough to verify that the inequality \eqref{E:MCPdef2} is satisfied.  

\smallskip
We start observing that the function
\begin{equation}
\label{E:decreasing}
\frac{s_{K/(N-1)}(D-\cdot\,)}{s_{K/(N-1)}(\cdot)}
\end{equation}
is  decreasing   in ${[0,D]}$; this will be proved showing its first derivative to be negative:
$$
 \frac{s_{K/(N-1)}'(D-a)}{s_{K/(N-1)}(a)}  + \frac{s_{K/(N-1)}(D-a)s_{K/(N-1)}'(a)}{s^{2}_{K/(N-1)}(a)} \geq 0.
$$
The previous inequality is straightforward for $K \leq 0$; for $K> 0$, assuming without loss of generality   $K = N-1$, it  reduces to
$\sin(a) \cos(D-a) + \sin(D-a) \cos(a) = \sin(D)\geq0$, that is always verified with the strict inequality except for the trivial  case  $D=\pi$ (where the function \eqref{E:decreasing} is identically equal to one).\\
 Using the result just obtained, we are able to check \eqref{E:MCPdef2} distinguishing three cases.

\noindent
If $x_{0}\leq x_{1} \leq a$: 
$$
 \left(\frac{s_{K/(N-1)}(D-x_{1})}{s_{K/(N-1)}(D-x_{0})}\right)^{N-1}
= \frac{h_{a}(x_{1})}{h_{a}(x_{0})} 
\leq \left(\frac{s_{K/(N-1)}(x_{1})}{s_{K/(N-1)}(x_{0})}\right)^{N-1}.
$$
If $a\leq x_{0}\leq x_{1}$: 
$$
\left(\frac{s_{K/(N-1)}(D-x_{1})}{s_{K/(N-1)}(D-x_{0})}\right)^{N-1}
\leq
\frac{h_{a}(x_{1})}{h_{a}(x_{0})}=
 \left(\frac{s_{K/(N-1)}(x_{1})}{s_{K/(N-1)}(x_{0})}\right)^{N-1}.
$$
If $ x_{0}\leq a\leq x_{1}$: 
$$
\frac{h_{a}(x_{1})}{h_{a}(x_{0})} =  
\left(\frac{s_{K/(N-1)}(x_{1})}{s_{K/(N-1)}(a)}\right)^{N-1} . 
\left(\frac{s_{K/(N-1)}(D- a)}{s_{K/(N-1)}(D-x_{0})}\right)^{N-1}; 
$$
 using again  the fact that \eqref{E:decreasing} is decreasing, we get the claim.
\end{proof}

\begin{lemma}
\label{lem:nocd}
For every choice of $K$, $N$ and $D$, except the case in which $K>0$ and $D= \pi \sqrt{(N-1)/K}$, the density $h_a$ defined in \eqref{equ:ha}  does not verify $\CD(K,N)$.
\end{lemma}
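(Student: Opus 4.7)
The plan is to pinpoint the failure of $\CD(K,N)$ at the corner that $h_a$ carries at $x=a$. My strategy is first to recall the one-dimensional characterization of $\CD(K,N)$ as a distorted concavity of $h^{1/(N-1)}$, and then to show that the one-sided derivatives of $h_a^{1/(N-1)}$ at $a$ jump upward, which violates this concavity at arbitrarily short scales.

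I would start by invoking the standard one-dimensional equivalence: $((0,D),|\cdot|,h\L^{1})$ verifies $\CD(K,N)$ if and only if
\[
h((1-t)x_{0}+tx_{1})^{\frac{1}{N-1}} \geq \sigma^{(1-t)}_{K,N-1}(|x_{1}-x_{0}|)\, h(x_{0})^{\frac{1}{N-1}} + \sigma^{(t)}_{K,N-1}(|x_{1}-x_{0}|)\, h(x_{1})^{\frac{1}{N-1}}
\]
holds for every $x_{0},x_{1}\in(0,D)$ and $t\in[0,1]$. Setting $g_a:=h_a^{1/(N-1)}$, formula \eqref{equ:ha} exhibits $g_a$ as two smooth pieces joined continuously at $a$; a direct differentiation gives the one-sided derivatives
\[
g_a'(a^{-}) = -f_{K,N,D}(a)^{\frac{1}{N-1}}\frac{s'_{K/(N-1)}(D-a)}{s_{K/(N-1)}(D-a)}, \qquad g_a'(a^{+}) = f_{K,N,D}(a)^{\frac{1}{N-1}}\frac{s'_{K/(N-1)}(a)}{s_{K/(N-1)}(a)},
\]
so the jump $J:=g_a'(a^{+})-g_a'(a^{-})$ is a positive multiple of $s'_{K/(N-1)}(a)/s_{K/(N-1)}(a) + s'_{K/(N-1)}(D-a)/s_{K/(N-1)}(D-a)$. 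For $K\leq 0$ each summand is strictly positive. For $K>0$, writing $k:=\sqrt{K/(N-1)}$, the bracket reduces via the cotangent-sum identity to $k\sin(kD)/[\sin(ka)\sin(k(D-a))]$, which is strictly positive precisely when $D<\pi/k=\pi\sqrt{(N-1)/K}$ and vanishes exactly in the excluded case $D=\pi\sqrt{(N-1)/K}$.

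With $J>0$ established, I would then test the displayed $\CD$-inequality at $x_{0}=a-\ve$, $x_{1}=a+\ve$, $t=1/2$ and let $\ve\downarrow 0$. Using $\sigma^{(1/2)}_{K,N-1}(2\ve)=\tfrac{1}{2}+O(\ve^{2})$ together with the one-sided Taylor expansions $g_a(a\pm\ve)=g_a(a)\pm\ve\,g_a'(a^{\pm})+O(\ve^{2})$, the right-hand side becomes $g_a(a)+(\ve/2)J+O(\ve^{2})$, strictly exceeding the left-hand side $g_a(a)$ for all sufficiently small $\ve>0$. This contradicts $\CD(K,N)$.

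The main obstacle is the sign analysis of the derivative jump $J$ across the three regimes $K<0$, $K=0$, $K>0$; in particular one must identify the unique case $J=0$, namely $K>0$ and $D=\pi\sqrt{(N-1)/K}$, and check that there $h_a$ coincides for every $a$ with the spherical model density (a genuine $\CD(K,N)$-density), which is precisely the case the lemma excludes. The remaining work --- recalling the one-dimensional form of $\CD(K,N)$ and writing a first-order expansion around the corner --- is standard.
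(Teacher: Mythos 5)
Your proof is correct, and it reaches the same obstruction as the paper (the corner of $h_a$ at $x=a$ is incompatible with the distorted concavity that one-dimensional $\CD(K,N)$ demands), but by a different route. The paper tests the $\CD$ inequality globally: it fixes arbitrary $x_0<a<x_1$ with $a=(1-t)x_0+tx_1$, lower-bounds the right-hand side of \eqref{equ:defcd} using the strict decrease of $\frac{s_{K/(N-1)}(D-\cdot)}{s_{K/(N-1)}(\cdot)}$ (Lemma~\ref{L:specialdensity}), and then applies the $\sin$/$\sinh$ addition formula in the variables $(x_0,a,x_1)$ to show the right-hand side strictly exceeds $h_a(a)^{1/(N-1)}$. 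You instead test the inequality infinitesimally at the corner, showing that $g_a=h_a^{1/(N-1)}$ has a strictly positive derivative jump $J$ at $a$ and that this jump produces a first-order violation of \eqref{equ:defcd} with $x_0=a-\ve$, $x_1=a+\ve$, $t=1/2$, $\ve\downarrow 0$. Interestingly, the sign analysis you carry out for $J$ is the same trigonometric computation the paper uses to establish the strict decrease of $\frac{s_{K/(N-1)}(D-\cdot)}{s_{K/(N-1)}(\cdot)}$ in the proof of Lemma~\ref{L:specialdensity}: after dividing by $\sin(ka)\sin(k(D-a))$, the paper's identity $\sin(ka)\cos(k(D-a))+\sin(k(D-a))\cos(ka)=\sin(kD)\ge 0$ is precisely your cotangent-sum formula $\cot(ka)+\cot(k(D-a))=\sin(kD)/[\sin(ka)\sin(k(D-a))]$. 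So the two proofs rest on the identical elementary fact; yours packages it as positivity of a derivative jump, which makes the excluded case $D=\pi\sqrt{(N-1)/K}$ (where $J=0$ and $h_a$ collapses to $\sin^{N-1}$) transparent, whereas the paper's global version requires no Taylor expansion or one-sided differentiation and directly exhibits the failure at every convex combination that passes through $a$.
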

\begin{proof}
Recall that a non-negative Borel function $h$ defined on an interval $I\subset \mathbb{R}$ is called a $\CD(K,N)$ density  if for every $t\in[0,1]$  and for all $x_0,x_1\in{I}$ such that $x_0<x_1$, it holds:
\begin{equation}\label{equ:defcd}
h((1-t)x_0+t x_1)^{\frac{1}{N-1}} \geq \sigma^{(1-t)}_{K,N-1}(x_1-x_0)h(x_0)^{\frac{1}{N-1}}+ \sigma^{(t)}_{K,N-1}(x_1-x_0)h(x_1)^{\frac{1}{N-1}}.
\end{equation}
In order to prove our claim we will discuss several cases.\\
If $K=0$, the inequality  \eqref{equ:defcd} simply reduces to the   concavity of $h^{\frac{1}{N-1}}$.  We will prove now that \eqref{equ:defcd} fails for the density $h_a(\cdot)$ exactly for convex combinations that give out the point $a$. Pick $x_0<a<x_1$ and let $t\in{(0,1)}$ be such that $a=(1-t)x_0+t x_1$. It follows that
\begin{align*}
(1-t)h_a(x_0)^{\frac{1}{N-1}} +t h_a(x_1)^{\frac{1}{N-1}}&= f_{0,N,D}(a)^{\frac{1}{N-1}}\biggl[ (1-t)\biggl(\frac{D-x_0}{D-a}\biggr)+t\biggl(\frac{x_1}{a}\biggr) \biggr]\\
&>  f_{0,N,D}(a)^{\frac{1}{N-1}}=h_a(a)^{\frac{1}{N-1}},
\end{align*}
hence\eqref{equ:defcd} is not satisfied. \\ 
If $K\neq0$, we argue as follows. Since  $a=(1-t)x_0+t x_1$, it should be $t=\frac{a-x_0}{x_1-x_0}$ and $1-t=\frac{x_1-a}{x_1-x_0}$. Hence, we can rewrite the second member of the inequality \eqref{equ:defcd} in this form
\begin{equation}
 f_{K,N,D}(a)^{\frac{1}{N-1}}\biggl[\frac{s_{K/(N-1)}( x_{1}-a  )}{s_{K/(N-1)}(x_1- x_{0})}\cdot\frac{s_{K/(N-1)}(D-x_0)}{s_{K/(N-1)}(D-a)}  + \frac{s_{K/(N-1)}( a-x_{0}  )}{s_{K/(N-1)}(x_1- x_{0})}\cdot\frac{s_{K/(N-1)}(x_1)}{s_{K/(N-1)}(a)}      \biggr];
\end{equation}
using now that \eqref{E:decreasing} is a strictly  decreasing function, we get that the quantity above is strictly greater than
\begin{equation}
\label{E:boh}
 f_{K,N,D}(a)^{\frac{1}{N-1}}\biggl[\frac{s_{K/(N-1)}( x_{1}-a  )}{s_{K/(N-1)}(x_1- x_{0})}\cdot\frac{s_{K/(N-1)}(x_0)}{s_{K/(N-1)}(a)}  + \frac{s_{K/(N-1)}( a-x_{0}  )}{s_{K/(N-1)}(x_1- x_{0})}\cdot\frac{s_{K/(N-1)}(x_1)}{s_{K/(N-1)}(a)}      \biggr].
\end{equation}
If $K<0$,  assuming without loss of generality that $K=-(N-1)$, we get that \eqref{E:boh} can be rewritten in the following way
\[
 f_{-(N-1),N,D}(a)^{\frac{1}{N-1}}\biggl[\frac{\sinh( x_{1}-a  )\sinh(x_0)+\sinh(a-x_0)\sinh(x_1)}{\sinh(a)\sinh(x_1-x_0)} \biggr]=f_{-(N-1),N,D}(a)^{\frac{1}{N-1}},
\]
by straightforward computations.
Arguing in the same way in the case $K>0$ ( assuming as usual that $K=N-1$), we get that \eqref{E:boh} can be rewritten in this form
\[
 f_{N-1,N,D}(a)^{\frac{1}{N-1}}\biggl[\frac{\sin( x_{1}-a  )\sin(x_0)+\sin(a-x_0)\sin(x_1)}{\sin(a)\sin(x_1-x_0)} \biggr]=f_{N-1,N,D}(a)^{\frac{1}{N-1}}.
\]
Hence the claim follows also in this case.
\end{proof}

\bigskip

\subsection{One dimensional isoperimetric inequality}

To properly formulate the one-dimensional minimization problem,
let us consider the following set of probabilities
$$
\widetilde{\F}_{K,N,D}=\{ \mu\in{\P(\R)}:\mu=h_{\mu}\L^1,  h_{\mu}:[0,D]\to \mathbb{R},\ \ \MCP(K,N) \text{ density}\},
$$
and consider the following ``restricted'' minimization:
 for each $v \in (0,1)$
$$
\widetilde{\I}_{K,N,D} (v):=\inf \{ \mu^{+}(A): A\subset [0,D],\mu(A)=v, \mu\in{\widetilde{\F}_{K,N,D}} \}.
$$
The term ``restricted'' is motivated by the choice of fixing the 
domain of the $\MCP(K,N)$ densities.
For the ``unrestricted'' one-dimensional minimization we will adopt the classical notation 
\begin{equation}\label{E:normalminimal}
\I_{K,N,D} (v):=\inf \{ \mu^{+}(A): A\subset [0,D],\mu(A)=v, \mu\in \F_{K,N,D} \},
\end{equation}
where $\F_{K,N,D} = \cup_{D'\leq D} \widetilde{\F}_{K,N,D'}$.

\smallskip

The final claim will be to prove that each $h_{a}$ is a 
minimum of the isoperimetric problem for the volume equal to 
$\int_{(0,a)} h_{a}(x) \,dx$. We will therefore show that each volume $v \in (0,1)$ is reached in this manner.

\begin{lemma}\label{L:volume}
The map 
$$
(0,D) \ni a \longmapsto v(a) : = \int_{(0,a)} h_{a}(x)\, dx\,\, \in (0,1),
$$
is invertible.
\end{lemma}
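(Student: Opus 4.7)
The plan is to show that $v$ is a continuous strictly increasing bijection from $(0,D)$ onto $(0,1)$, which gives invertibility. Continuity of $v$ follows from continuity of $a\mapsto f_{K,N,D}(a)$ (Proposition \ref{P:lower}) together with the explicit form \eqref{equ:ha} of $h_a$, via dominated convergence. The heart of the matter is strict monotonicity.

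First, I would eliminate $f_{K,N,D}(a)$ from the ratio $v(a)/(1-v(a))$. Performing the change of variables $y=D-x$ in the integral for $v(a)$ and using the explicit form of $h_a$ on each side of $a$, one gets
\[
v(a) = \frac{f_{K,N,D}(a)}{s_{K/(N-1)}(D-a)^{N-1}}\int_{D-a}^{D} s_{K/(N-1)}(y)^{N-1}\,dy,
\]
\[
1-v(a) = \frac{f_{K,N,D}(a)}{s_{K/(N-1)}(a)^{N-1}}\int_{a}^{D} s_{K/(N-1)}(y)^{N-1}\,dy.
\]
Dividing, $f_{K,N,D}(a)$ cancels and $\tfrac{v(a)}{1-v(a)} = G(a)\cdot H(a)$, where
\[
G(a) := \left(\frac{s_{K/(N-1)}(a)}{s_{K/(N-1)}(D-a)}\right)^{N-1}, \qquad H(a) := \frac{\Phi(D-a)}{\Phi(a)}, \qquad \Phi(t) := \int_t^D s_{K/(N-1)}(y)^{N-1}\,dy.
\]
Since $v\mapsto v/(1-v)$ is a strictly increasing bijection of $(0,1)$ onto $(0,\infty)$, it suffices to show that both $G$ and $H$ are strictly positive and strictly increasing on $(0,D)$.

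For $H$, using $\Phi'(t) = -s_{K/(N-1)}(t)^{N-1}$ a direct differentiation yields
\[
H'(a) = \frac{s_{K/(N-1)}(D-a)^{N-1}\,\Phi(a) + s_{K/(N-1)}(a)^{N-1}\,\Phi(D-a)}{\Phi(a)^{2}} > 0.
\]
For $G$, if $K\leq 0$ then $s_{K/(N-1)}$ is strictly increasing on $[0,D]$, so $s_{K/(N-1)}(a)$ grows while $s_{K/(N-1)}(D-a)$ decreases as $a$ grows, and $G$ is obviously strictly increasing. The genuine computation is the case $K>0$: setting $\kappa := K/(N-1)$ and using the identity $\cot A + \cot B = \sin(A+B)/(\sin A\sin B)$,
\[
(\log G)'(a) = (N-1)\sqrt{\kappa}\,\bigl(\cot(\sqrt{\kappa}a)+\cot(\sqrt{\kappa}(D-a))\bigr) = \frac{(N-1)\sqrt{\kappa}\,\sin(\sqrt{\kappa}D)}{\sin(\sqrt{\kappa}a)\sin(\sqrt{\kappa}(D-a))},
\]
which is strictly positive since the trivial case $D=\pi\sqrt{(N-1)/K}$ has been excluded at the start of the section, so $\sqrt{\kappa}D\in(0,\pi)$ and hence $\sqrt{\kappa}a,\sqrt{\kappa}(D-a)\in(0,\pi)$.

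Multiplying, $v/(1-v)$, and therefore $v$ itself, is strictly increasing, so $v$ is invertible. The only mildly delicate point is the $K>0$ case for $G$: unlike $H$, it does not reduce to elementary monotonicity of $s_{K/(N-1)}$, and one really needs the sum-to-product identity together with the exclusion $D<\pi\sqrt{(N-1)/K}$ to see positivity of $(\log G)'$; everything else is bookkeeping with the explicit formulas.
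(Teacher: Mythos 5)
Your proof is correct and reaches the same conclusion, but by a genuinely different bookkeeping route from the paper's. The paper rewrites $v(a)$ directly as the product
\[
v(a) \;=\; \frac{f_{K,N,D}(a)}{s_{K/(N-1)}^{N-1}(D-a)}\,\int_{(0,a)} s_{K/(N-1)}^{N-1}(D-x)\,dx ,
\]
observes the integral factor is manifestly strictly increasing, and then shows the first factor is non-decreasing by differentiating its reciprocal: that derivative reduces, after a cancellation, to $\bigl[(s_{K/(N-1)}(D-a)/s_{K/(N-1)}(a))^{N-1}\bigr]'\cdot\int_{(a,D)}s_{K/(N-1)}^{N-1}$, which is $\leq 0$ by the monotonicity of the ratio $s_{K/(N-1)}(D-\cdot)/s_{K/(N-1)}(\cdot)$ already established in the proof of Lemma \ref{L:specialdensity}. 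You instead pass to $v/(1-v)$, so that $f_{K,N,D}$ cancels completely, and obtain the cleaner factorization $v/(1-v)=G\cdot H$ with both factors transparently strictly increasing. The crucial ingredient is shared: your $G$ is exactly the reciprocal, raised to $N-1$, of the ratio \eqref{E:decreasing} the paper analyzes, and your $\cot A+\cot B=\sin(A+B)/(\sin A\sin B)$ computation in the $K>0$ case is algebraically the same identity $\sin a\cos(D-a)+\sin(D-a)\cos a=\sin D$ used there. What your reorganization buys is that you never need to differentiate an expression involving $f_{K,N,D}$ and you get strict (rather than merely non-strict) monotonicity of both factors, so the logic ``strictly increasing $\times$ non-decreasing'' collapses to ``strictly increasing $\times$ strictly increasing.'' One small remark: you assert that $v$ is a bijection onto $(0,1)$ but only argue injectivity and continuity; surjectivity requires checking $v(a)\to 0$ as $a\to 0^+$ and $v(a)\to 1$ as $a\to D^-$, which follows from $\Phi(D-a)\to 0$ and $\Phi(a)\to 0$ respectively in your two displayed formulas, together with boundedness of $f_{K,N,D}/s_{K/(N-1)}^{N-1}$ near the endpoints --- the paper glosses over this as well, so this is not a defect specific to your argument.
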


\begin{proof}
It will be convenient to rewrite the function in the following way
\begin{equation}\label{E:anotherway}
v(a) = \frac{f_{K,N,D}(a)}{s_{K/(N-1)}^{N-1}(D-a)} \int_{(0,a)}
 s^{N-1}_{K/(N-1)}(D- x) \, dx
\end{equation}
implying differentiability. Given the strict monotonicity of the integral with respect to the variable $a$, it is sufficient to prove that also the other factor is an increasing function. Since 
$$
 \left(\frac{{s_{K/(N-1)}^{N-1}(D-a)}}{f_{K,N,D}(a)}\right)' = 
\left[ \left(\frac{s_{K/(N-1)}(D-a) }{s_{K/(N-1)}(a)} \right)^{N-1}\right]' 
\int_{(a,D)}
 s^{N-1}_{K/(N-1)}(x) \, dx,
$$
it follows that the previous derivative has the same  sign of the derivative of \eqref{E:decreasing}, thus it is  non positive and the claim follows.
\end{proof}
\noindent
Hence for each $K,N,D$ it is possible to define the inverse map of $v(a)$ from Lemma \ref{L:volume}:
\begin{equation*}
(0,1) \ni v \longmapsto a_{K,N,D}(v) \in (0,D),
\end{equation*}
with $a_{K,N,D}(v)$ the unique element such that
\begin{equation}
\label{equ:av}
 \,\int_{(0,a_{K,N,D}(v))} h_{a_{K,N,D}(v)}(x) \,dx = v. 
\end{equation}

\noindent
For ease of notation we will prefer in few places the shorter notation $a_{v}$ to denote $a_{K,N,D}(v)$.

\begin{remark}\label{R:symmetriv}
The function $v\mapsto a_{v}$ enjoys a simple symmetric property:
by definition we have that
\begin{align*}
1-v&= \frac{f_{K,N,D}(a_v)}{s_{K/(N-1)}^{N-1}(a_v)} \int_{(a_v,D)}
 s^{N-1}_{K/(N-1)}( x) \, dx\\
&=\frac{f_{K,N,D}(D-a_v)}{s_{K/(N-1)}^{N-1}(a_v)} \int_{(0,D-a_v)}
 s^{N-1}_{K/(N-1)}(D- x) \, dx\\
&=v(D-a_v),
\end{align*}
where the last identity follows from \eqref{E:anotherway}.
Since there exists a unique value $a_{1-v}\in{(0,D)}$ such that $v(a_{1-v})=1-v$, it turns out  that $a_{1-v}=D-a_v$. 
\end{remark}

\noindent
The first main result of this note is the following explicit 
formula for $\widetilde{\I}_{K,N,D}$.

\begin{theorem}\label{T:comparison}
For each volume $v\in{(0,1)}$, it holds 
$$
\widetilde{\I}_{K,N,D} (v) = f_{K,N,D} (a_{K,N,D}(v)).
$$ 
In particular, since $f_{K,N,D} (a_{K,N,D}(v)) =
h_{a_{K,N,D}(v)}(a_{K,N,D}(v))$, the lower bound is attained.
\end{theorem}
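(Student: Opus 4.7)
The plan is to prove $\widetilde{\I}_{K,N,D}(v) \leq f(a_v)$ and $\widetilde{\I}_{K,N,D}(v) \geq f(a_v)$ separately. The upper bound is immediate from the construction: I take the candidate $h_{a_v}$, admissible by Lemma~\ref{L:specialdensity}, together with the test set $A = [0, a_v]$. By the defining identity \eqref{equ:av}, $\int_A h_{a_v}(x)\,dx = v$; by the construction \eqref{equ:ha} the Minkowski content of $A$ equals the interior-boundary value $h_{a_v}(a_v) = f_{K,N,D}(a_v)$. Hence $\widetilde{\I}_{K,N,D}(v) \leq f(a_v)$.

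For the lower bound I fix $h \in \widetilde{\F}_{K,N,D}$ and $A \subset [0,D]$ with $\mu(A) = v$ and aim at $\mu^+(A) \geq f(a_v)$. The main tool is the MCP ratio bound \eqref{E:MCPdef2}: at a reference point $b$, applied with the left endpoint $0$ it gives $h(x) \leq h(b)(s_{K/(N-1)}(x)/s_{K/(N-1)}(b))^{N-1}$ for $x \geq b$, and with the right endpoint $D$ it gives $h(x) \leq h(b)(s_{K/(N-1)}(D-x)/s_{K/(N-1)}(D-b))^{N-1}$ for $x \leq b$. Integrating produces the key estimates
\[
\int_0^b h \leq h(b)\,G(b), \qquad \int_b^D h \leq h(b)\,\tilde G(b),
\]
where $G(b)$ and $\tilde G(b)$ are exactly the two integrals appearing in the definition of $h_a$ in \eqref{equ:ha}. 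Since $h_{a_v}$ saturates both MCP ratios and $\int_0^{a_v} h_{a_v} = v$, one obtains the identities $f(a_v) = v/G(a_v) = (1-v)/\tilde G(a_v)$. For $A = [0,b]$ with $\int_0^b h = v$ this yields $h(b) \geq v/G(b)$ and $h(b) \geq (1-v)/\tilde G(b)$; using the strict monotonicity of $G$ (increasing) and $\tilde G$ (decreasing), the first bound gives $h(b) \geq f(a_v)$ when $b \leq a_v$, the second when $b \geq a_v$, so in either case $\mu^+(A) = h(b) \geq f(a_v)$. The symmetric case $A = [b',D]$ follows from the involution $x \mapsto D-x$, which swaps $v$ with $1-v$ and $a_v$ with $D-a_v$ by Remark~\ref{R:symmetriv}, while preserving $\widetilde{\F}_{K,N,D}$.

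For a general multi-interval $A$ the Minkowski content decomposes as a sum of $h$-values over interior boundary points. I expect to conclude either by a rearrangement reducing the problem to a single interval (by merging consecutive components), or by applying the above estimate at one well-chosen boundary point with the partial volume it separates, the remaining boundary contributions being non-negative. The main obstacles I anticipate are rigorously verifying the strict monotonicity of $G$ and $\tilde G$ across every parameter regime---particularly subtle when $K > 0$, where $s_{K/(N-1)}$ is non-monotone on part of $[0,D]$, and the identity $\tilde G'(c) = -1 - (N-1)(s'(c)/s(c))\tilde G(c)$ demands a careful sign analysis in the spirit of the proof of Proposition~\ref{P:lower}(i)---and the clean reduction from multi-interval to single-interval sets.
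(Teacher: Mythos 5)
Your upper bound and the core lower-bound mechanism (using the MCP ratio bound \eqref{E:MCPdef2} to get $\int_0^b h \le h(b)\,A_{K,N,D}(b)$, together with monotonicity of $A_{K,N,D}$ and the symmetric identity $f(a_v)=v/A(a_v)=(1-v)/A(D-a_v)$) match the paper exactly. But the argument is genuinely incomplete: you only prove $\mu^+(A)\ge f(a_v)$ for $A=[0,b]$ or $A=[b',D]$, and then wave at the general case. The gap is real, and your two proposed fixes are both problematic. ``Merging consecutive components'' changes the volume, and it is not clear that it decreases perimeter; ``one well-chosen boundary point with the partial volume it separates'' fails for sets like $A=[0,a_1]\cup[b_1,D]$, where no single boundary point separates volume $\ge v$ and one really needs to sum the two contributions $h(a_1)+h(b_1)$. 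You also do not handle a single interior interval $[a,b]$ with $0<a<b<D$.

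What the paper actually does (and what you would need) is an explicit case analysis on the position of the boundary points relative to $[a_v,D-a_v]$. One first observes the cheap case: if any boundary point $c$ of $A$ lies in $[a_v,D-a_v]$ then already $h(c)\ge f_{K,N,D}(c)\ge f_{K,N,D}(a_v)$ by Proposition~\ref{P:lower} and we are done; so one may assume every component is contained in $[0,a_v]$ or in $[D-a_v,D]$, with possibly one component spanning $[a_v,D-a_v]$. For components in $[D-a_v,D]$ one bounds $\int_{a_i}^{b_i}h\le h(a_i)A(D-a_i)\le h(a_i)A(a_v)$; for components in $[0,a_v]$ one bounds $\int_{a_i}^{b_i}h\le h(b_i)A(b_i)\le h(b_i)A(a_v)$; \emph{summing} over all components gives $v\le A(a_v)\sum_i\bigl(h(a_i)+h(b_i)\bigr)$, which is the estimate. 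The spanning-component case is handled by the complementary volume $1-v$ and $A(D-a_v)$. Without this summation-over-components structure, your single-point or rearrangement ideas do not close the argument. (As a minor point: your ``$\tilde G$ decreasing'' claim is correct, but the clean way to see it is $\tilde G(b)=A_{K,N,D}(D-b)$, so it reduces to Lemma~\ref{L:Aincreasing} rather than needing a separate sign analysis in the $K>0$ regime.)
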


\noindent
For the proof of Theorem \ref{T:comparison} will be useful to consider the function 
$A_{K,N,D}:[0,D)\to [0,\infty)$   defined as follows:
\begin{equation}\label{equ:A}
A_{K,N,D}(a):=\frac{v(a)}{f_{K,N,D}(a)}=\int_{(0,a)}
\left(\frac{  s_{K/(N-1)}(D- x)}{s_{K/(N-1)}(D-a)} \right)^{N-1} \, dx.
\end{equation}
We will use that $[0,D) \ni a \mapsto A_{K,N,D}(a)$ is increasing; we postpone the proof of this fact at the end of the section.
From the symmetric property of $a_{v}$ observed few lines above, 
we obtain the analogous one for $A_{K,N,D}$:
\begin{equation}\label{E:symmA}
\frac{1-v}{A_{K,N,D}(D-a_v)}=\frac{v(D-a_v)f_{K,N,D}(D-a_v)}{v(D-a_v)}=f_{K,N,D}(a_v).
\end{equation}

\begin{proof}[Proof of Theorem \ref{T:comparison}]

Fix $K,N,D \in \R$ with $N>1$ and any $v\in (0,1)$. 
Consider $h_{a_{v}}$ and $h_{a_{1-v}}$ and notice that 
$$
\int_{(0,a_{v})} h_{a_{v}}(x) \,dx = 
\int_{(a_{1-v},D)} h_{a_{1-v}}(x) \,dx = v
$$
and  
$$
h_{a_{v}}(a_{v}) = f_{K,N,D} (a_{v}) = 
f_{K,N,D} (a_{1- v}) = h_{a_{1-v}} (a_{1-v}),
$$
where the second equality follows from $a_{1-v} = D - a_{v}$ and 
the symmetric property of $f_{K,N,D}$.
Hence  it is enough to show that for any $\MCP(K,N)$ density $h : [0,D] \to [0,\infty)$, the following inequality is valid
$$
\I_{h}(v) \geq f_{K,N,D} (a_{K,N,D}(v)).
$$
In the one-dimensional setting, taking the lowest possible Minkowski content or the lowest possible perimeter with respect to $h$ makes no difference 
(see \cite[Corollary 3.2]{CM16b}).
Hence fix any $h$ as above and a set $E$ of finite perimeter 
with respect to $h \L^{1}$. It follows that, up to a Lebesgue negligible set, $E = \cup_{i\in{\mathscr{I}}} [a_i,b_i]\subseteq [0,D]$, where $\mathscr{I}\subseteq\mathbb{N}$ is a set of indices, 
so that (see \cite[Proposition 3.1]{CM16b})
$$
\PP_{h}(E) = \sum_{i} h(a_{i}) + h(b_{i}),
$$
where $\PP_{h}$ denotes the perimeter with respect to $h$.
First notice that if any $a_{i}, b_{i}$ is in the interval 
having as boundary points $a_v$ and $D-a_v$, the claim is proved
$$
h(x)\geq f_{K,N,D}
(x) \geq \inf_{y\in [a_v,D-a_v]}f_{K,N,D}(y)=f_{K,N,D}(a_v);
$$
the same chain of inequalities is valid if $2a_{v}\geq D$.
So for each $i\in \mathscr{I}$, points $a_{i},b_{i} \notin (a_{v},D-a_{v})$ if $a_{v} \leq D/2$, or
$a_{i},b_{i} \notin (D-a_{v},a_{v})$ if $a_{v} \geq D/2$.

\medskip
It is convenient to assume with no loss in generality that $a_{v} \leq D-a_{v}$ and consider the following subsets of indices
$$
\mathscr{I}_{1} : = \{ i \in \mathscr{I}\colon a_{i} \geq D-a_{v} \}, \qquad
\mathscr{I}_{2}: = \{ i \in \mathscr{I}\colon b_{i} \leq a_{v} \};
$$
notice that $\mathscr{I}_{1} \cap \mathscr{I}_{2} = \emptyset$.

\noindent
{\bf Case 1.} $\mathscr{I}=\mathscr{I}_{1}$. \\
Then
\begin{align*}
v=&~\sum_{i\in{\mathscr{I}}}\int_{a_i}^{b_i} h(y)\,dy\leq\sum_{i\in{\mathscr{I}}}h(a_i) \int_{a_i}^D 
\left(\frac {s_{K/(N-1)}(y)}{s_{K/(N-1)}(a_i)}\right)^{N-1} dy  \\
= &~\sum_{i\in{\mathscr{I}}} h(a_i)A(D-a_i)\leq A(a_v)\sum_{i\in{\mathscr{I}}}h(a_i).
\end{align*}
\noindent
Hence, we get
\[
\sum_{i\in{\mathscr{I}}}(h(a_i)+h(b_i))\geq \sum_{i\in{\mathscr{I}}}h(a_i) \geq \frac{v}{A(a_v)}=f_{K,N,D}(a_v).
\]

\noindent
{\bf Case 2.} $\mathscr{I}=\mathscr{I}_{2}$. \\
It holds true
\begin{align*}
v=\sum_{i\in{\mathscr{I}}}\int_{a_i}^{b_i} h(y)dy&\leq \sum_{i\in{\mathscr{I}}}h(b_i)\int_{a_i}^{b_i}\left( \frac{ s_{K/(N-1)}(D-y)}{s_{K/(N-1)}(D-b_i)}\right)^{N-1}\,dy \\
 &\leq\sum_{i\in{\mathscr{I}}}h(b_i) A(b_i)
 \\
&\leq A(a_v)\sum_{i\in{\mathscr{I}}} h(b_i),
\end{align*}
\noindent
for the increasing monotonicity of the function $A(\cdot)$.
\medskip

\noindent
{\bf Case 3.} $\mathscr{I} \neq \mathscr{I}_{1} 
\cup \mathscr{I}_{2}$. \\
There exists $i\in \mathscr{I}$ such that 
$a_{i} \leq a_{v}, D-a_{v} \leq b_{i}$.   
Then
\begin{align*}
1-v &\leq \int_0^{a_i}h(y)\,dy+\int_{b_i}^Dh(y)\,dy \\
&\leq h(a_i)\int_{0}^{a_i}
\left(\frac{ s_{K/(N-1)}(D-y)}{s_{K/(N-1)}(D-a_i)}\right)^{N-1}\,dy +h(b_i)\int_{b_i}^D 
\left(\frac {s_{K/(N-1)}(y)}{s_{K/(N-1)}(b_i)}\right)^{N-1} \, dy \\
&= h(a_i)A(a_{i})+h(b_i) A(D-b_i)\\
&\leq A(D-a_v)[h(a_i)+h(b_i)],
\end{align*}
proving the claim. 

\medskip
\noindent
{\bf Case 4.} $\mathscr{I} = \mathscr{I}_{1} \cup\mathscr{I}_{2}$.  \\
We use the estimates of {\bf Case 2.} for $\mathscr{I}_{1}$  and the ones in {\bf Step 1.} for $\mathscr{I}_{2}$, so:
\[
v=\sum_{i\in{\mathscr{I}}}\int_{a_i}^{b_i} h(y)\,dy= \sum_{i\in{\mathscr{I}_{1}}} \int_{a_i}^{b_i} h(y)\,dy+\sum_{j\in{\mathscr{I}_{2}}} \int_{a_j}^{b_j} h(y)\,dy\leq A(a_{v}) (\sum_{i\in{\mathscr{I}_{1}}}h(a_i)+\sum_{j\in{\mathscr{I}_{2}}}h(b_j)).
\]
Hence, the claim is proved also in this class.
\end{proof}

\begin{lemma}\label{L:Aincreasing}
The function $A_{K,N,D}(\cdot)$ is strictly increasing on $[0,D)$. 
\end{lemma}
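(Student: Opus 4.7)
The plan is to show $A_{K,N,D}'(a) > 0$ for every $a \in (0,D)$ by a direct differentiation. Abbreviating $s := s_{K/(N-1)}$ and $n := N-1$, set
\begin{equation*}
F(a) := \int_0^a s(D-x)^n \, dx, \qquad G(a) := s(D-a)^n,
\end{equation*}
so that $A_{K,N,D} = F/G$, with $F'(a) = G(a)$ and $G'(a) = -n\, s(D-a)^{n-1} s'(D-a)$. The quotient rule then yields
\begin{equation*}
A_{K,N,D}'(a) = \frac{\Psi(a)}{s(D-a)^{n+1}}, \qquad \Psi(a) := s(D-a)^{n+1} + n \, s'(D-a) \, F(a).
\end{equation*}
Since $s(D-a) > 0$ throughout $[0,D)$ in all admissible cases, strict monotonicity reduces to showing $\Psi > 0$ on $[0,D)$.

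If $s'(D-a) \geq 0$, both summands defining $\Psi$ are nonnegative and the first is strictly positive, so $\Psi(a) > 0$ is immediate. This already covers the full range $[0,D)$ when $K \leq 0$ (where $s_{K/(N-1)}$ is monotone nondecreasing), as well as the subrange $D-a \leq \tfrac{\pi}{2}\sqrt{(N-1)/K}$ when $K>0$.

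The delicate regime is $K>0$ with $D-a > \tfrac{\pi}{2}\sqrt{(N-1)/K}$, where $s'(D-a)<0$. Here I would differentiate $\Psi$ and invoke the ODE $s'' = -\tfrac{K}{N-1}\, s$; after the cross-cancellation between $-(n+1) s(D-a)^n s'(D-a)$ and $n s'(D-a) F'(a) = n s'(D-a) s(D-a)^n$ one obtains
\begin{equation*}
\Psi'(a) = -s(D-a)^n s'(D-a) + K \, s(D-a) \, F(a),
\end{equation*}
which in this regime is a sum of two strictly positive terms for $a>0$, so $\Psi'>0$. Combined with the boundary value $\Psi(0) = s(D)^{n+1} > 0$ (the borderline case $D=\pi\sqrt{(N-1)/K}$ being excluded as trivial by the remark opening Section \ref{S:oned}), this propagates to $\Psi>0$ on the whole subrange, completing the proof.

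The only real obstacle is precisely this last case: the naive sign analysis of $\Psi$ itself fails once $s'(D-a)<0$, and one must exploit the harmonic-oscillator structure of $s_{K/(N-1)}$ to rewrite $\Psi'$ as a manifestly positive sum, after which positivity of $\Psi$ is just propagated from the (trivially positive) boundary value at $a=0$.
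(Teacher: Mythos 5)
Your proof is correct, and it takes a genuinely different route from the paper for the difficult regime, while also unifying the easy cases. For $K\leq 0$ and for $K>0$ with $D\leq\tfrac{\pi}{2}\sqrt{(N-1)/K}$, the paper argues case by case (polynomial, $\sinh$, $\sin$) appealing directly to the monotonicity of $s(D-\cdot)$; your single observation ``$s'(D-a)\geq 0 \Rightarrow \Psi>0$'' subsumes all of those at once. For the delicate case $K>0$, $D-a>\tfrac{\pi}{2}\sqrt{(N-1)/K}$, the paper derives the first-order ODE $A' = 1 + \tfrac{N-1}{\tan(D-a)}\,A$ and then proves $A(a) < g(a) := -\tfrac{\tan(D-a)}{N-1}$ on $[0,D-\pi/2)$ by a barrier-function comparison (checking $g$ satisfies the strict differential inequality $g' > 1 + \tfrac{N-1}{\tan(D-a)}g$). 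Your inequality $\Psi>0$ is algebraically equivalent to the paper's $A<g$ (multiply $A<g$ through by $-n\,s'(D-a)\,s(D-a)^n>0$), but you establish it differently: you exploit the harmonic-oscillator identity $s'' = -\tfrac{K}{N-1}s$ to write $\Psi'$ as a manifestly positive sum, then propagate positivity forward from $\Psi(0)=s(D)^{N}>0$. This is a monotonicity argument rather than a sub/super-solution comparison and avoids having to guess the barrier $g$. Both are valid; yours is slightly more self-contained and structurally uniform across the three sign-cases of $K$.
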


\begin{proof}
If we are in the case $K=0$, we get that 
$$
A_{0,N,D}(a)=\int_{(0,a)}
\left(\frac{ D- x}{D-a}\right)^{N-1}dx
$$
and so $A_{0,N,D}(\cdot)$ is trivially increasing. If $K<0$, without loss of generality we can assume $K=-(N-1)$. In this case we have 
$$
A_{-(N-1),N,D}(a)=\int_{(0,a)}
\left(\frac{ \sinh(D- x)dx}{\sinh(D-a)}\right)^{N-1} \, dx
$$
and so again we get the claim  by the   monotonicity of the hyperbolic sine.
If $K>0$, we can  directly deal with the case $D<\pi\sqrt{(N-1)/K}$. Assuming  $K=N-1$, we can rewrite~\eqref{equ:A} in the following way:
$$
A_{N-1,N,D}(a)=\int_{(0,a)}
\left(\frac{ \sin(D- x)}{\sin(D-a)}\right)^{N-1} \,dx.
$$
For sure this function is increasing for $a\in{[D-\pi/2,D)}$ by the  monotonicity of  $\sin(D-\cdot)$; so, if $D\leq \pi/2$, we are done.  If this is not the case, i.e.  $D>\pi/2$,  we have to prove that the same result holds in $[0,D-\pi/2)$.  Computing the first derivative we obtain that
\begin{align}
A'_{N-1,N,D}(a)&=1+(N-1)\frac{\cos(D-a) }{\sin^{N}(D-a)}\int_{(0,a)}
 \sin^{N-1}(D- x) \, dx\notag \\
&=1+\frac{N-1}{\tan(D-a)}A_{N-1,N.D}(a);
\end{align}
so $A(\cdot)$ is solution of a differential equation. In order to prove that $A(\cdot)$ is an increasing function, we will check that its first derivative is positive, i.e.
\[
A_{N-1,N,D}(a) \leq -\frac{\tan(D-a)}{N-1}:=g(a), \,\,\forall a\in{[0,D-\pi/2)}.
\]

\noindent 
For $a=0$ we have  $A_{N-1,N,D}(a)=0$ and $g(a)=-\frac{\tan D}{N-1}>0$, hence the inequality at the initial point holds true. In order to prove that it holds for every $a\in{[0,D-\pi/2)}$, we will check that g verifies the following differential inequality:
\[
g'(a)> 1+\frac{N-1}{\tan(D-a)}\cdot g(a).
\]
Since the choice of $g$  makes the second member identically equals to zero, it is sufficient to prove that $g'(a)>0$ for every $a\in{[0,D-\pi/2)}$. This trivially holds true since
\[
g'(a)=\frac{1}{(N-1)\cos^2(D-a)}>0.
\]
Hence, the claim follows also in this case.
\end{proof}

We now analyse the dependence of $\widetilde{\I}_{K,N,D} (v)$ on the diameter. 

\begin{lemma}
\label{lem:decreasingD}
Fix $N, D>0$ and $v\in{(0,1)}$. 
\begin{itemize}
\item[-] if $K \leq 0$,  the map $D\mapsto \widetilde{\I}_{K,N,D} (v)$  is strictly decreasing;
\item[-] if $K > 0$,  the map $D \mapsto D\, \widetilde{\I}_{K,N,D} (v)$  is non-decreasing;
\end{itemize}
\end{lemma}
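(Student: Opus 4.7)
The plan is to exploit the scaling relation for $\MCP$-densities (the authors already quote a version of it right after equation~\eqref{equ:ha}): if $h$ is an $\MCP(K,N)$-density on $[0,L]$ and $\lambda>0$, then $\tilde h(y):=\lambda\, h(\lambda y)$ is an $\MCP(\lambda^{2} K, N)$-density on $[0,L/\lambda]$. This follows at once from \eqref{E:MCPdef} and the identity $\sigma^{(t)}_{K,N-1}(\lambda\theta)=\sigma^{(t)}_{\lambda^{2} K,N-1}(\theta)$. Coupled with the standard monotonicity $\MCP(K'',N)\Rightarrow \MCP(K,N)$ for $K''\geq K$ (an immediate consequence of the pointwise monotonicity of $K\mapsto \sigma^{(t)}_{K,N-1}(\theta)$), this lets us transport $\MCP(K,N)$-densities between different diameters, provided the scaling moves the curvature bound in the favourable direction. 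Under the affine bijection $\phi(x)=x/\lambda$, volumes are preserved and Minkowski content is multiplied by $\lambda$ (since $\phi(A)^{\varepsilon}=\phi(A^{\varepsilon/\lambda^{-1}})$).

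For $K\leq 0$, fix $D_{1}<D_{2}$ and take any competitor $h_{1}$ on $[0,D_{1}]$ with a set $A_{1}$ of volume $v$ and perimeter $p_{1}$. Scaling by $\lambda=D_{1}/D_{2}<1$ produces a density $\tilde h$ on $[0,D_{2}]$ which is a priori only $\MCP((D_{1}/D_{2})^{2}K, N)$; but $K\leq 0$ and $\lambda^{2}\leq 1$ give $\lambda^{2} K\geq K$, so $\tilde h$ is actually an admissible $\MCP(K,N)$-density. The image set has volume $v$ and perimeter $\lambda p_{1}=(D_{1}/D_{2})\,p_{1}$, whence taking the infimum
\[
\widetilde{\I}_{K,N,D_{2}}(v) \;\leq\; (D_{1}/D_{2})\,\widetilde{\I}_{K,N,D_{1}}(v)\;<\;\widetilde{\I}_{K,N,D_{1}}(v),
\]
where strict positivity of $\widetilde{\I}_{K,N,D_{1}}(v)$ follows from Proposition~\ref{P:lower}.

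For $K>0$ the roles are reversed: given a competitor $h_{2}$ on $[0,D_{2}]$ with set $A_{2}$ of volume $v$ and perimeter $p_{2}$, scale by $\lambda=D_{2}/D_{1}>1$ to land on $[0,D_{1}]$. The scaled density is $\MCP((D_{2}/D_{1})^{2}K, N)$, and since $K>0$ and $\lambda^{2}\geq 1$, again $\lambda^{2} K\geq K$, so it is $\MCP(K,N)$; the diameter bound is preserved because $D_{2}\leq \pi\sqrt{(N-1)/K}$ forces $D_{1}\leq (D_{1}/D_{2})\pi\sqrt{(N-1)/K}=\pi\sqrt{(N-1)/(\lambda^{2} K)}$. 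The perimeter is multiplied by $\lambda=D_{2}/D_{1}$, yielding
\[
D_{1}\,\widetilde{\I}_{K,N,D_{1}}(v)\;\leq\; D_{2}\,\widetilde{\I}_{K,N,D_{2}}(v),
\]
so $D\mapsto D\,\widetilde{\I}_{K,N,D}(v)$ is non-decreasing.

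No step is really hard; the whole argument rests on the observation that, in one dimension, changing the diameter of an $\MCP$-density is the same as changing $K$ by the factor $\lambda^{2}$. The only items requiring a brief check are the scaling of Minkowski content under the affine map $\phi$, the sign-sensitive inequality $\lambda^{2} K\geq K$ that dictates which of the two directions of scaling to use, and the compatibility of the positive-curvature diameter bound; all three are immediate.
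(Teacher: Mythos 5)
Your proposal is correct and follows the same route as the paper: both rescale a competitor density between $[0,D_1]$ and $[0,D_2]$, observe that the scaled density satisfies $\MCP(\lambda^2 K,N)$ and hence, since $\lambda^2 K\geq K$ for the sign of $K$ under consideration, also $\MCP(K,N)$, and then track how perimeter scales by the factor $\lambda$. Your write-up makes explicit two points the paper leaves implicit — the monotonicity $\MCP(K'',N)\subset\MCP(K,N)$ for $K''\geq K$, and working with arbitrary competitors rather than an assumed minimizer — but these are presentational, not a different method.
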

\begin{proof}
Given any $\MCP(K,N)$ density $h$ with domain $[0,D]$, and any other $D'$ defining $g(x): =\frac{D}{D'}h(\frac{Dx}{D'})$, for each $x \in [0,D']$, 
one easily gets that $g$ is an $\MCP(K',N)$ with domain $[0,D']$ and 
$K'=K(D/D')^2$.
Moreover for any $A\subset [0,D]$, 
$$
\PP_{g}\left(A\frac{D'}{D}\right) = \frac{D}{D'} \PP_{h}(A),
$$
where $\PP_{g}$ is the perimeter with respect to $g$ and 
$\PP_{h}$ the one with respect to $h$.
Assume $h$ is the optimal density and $A$ the optimal set, one gets 
$$
\widetilde{\I}_{K',N,D'} \leq \frac{D}{D'}\widetilde{\I}_{K,N,D}.
$$
Hence if $K \leq 0$ and $D'\geq D$:
$
\widetilde{\I}_{K,N,D} \geq \frac{D'}{D}  \widetilde{\I}_{K,N,D'}\geq 
\widetilde{\I}_{K,N,D'};
$
if $K > 0$ and $D\geq D'$:
$
D\, \widetilde{\I}_{K,N,D} \geq D' \, \widetilde{\I}_{K,N,D'}.
$
The claim follows.
\end{proof}

We then obtain straightforwardly the next fact.

\begin{corollary}
The one-dimensional isoperimetric profile function \eqref{E:normalminimal}
has the following representation:
\begin{equation}
\label{equ:sharp1d}
\I_{K,N,D}(v)=
\begin{cases}
f_{K,N,D} (a_{K,N,D}(v)) \qquad \qquad \quad \textit{if}\,\,\, K\leq 0,\\
\inf_{D'\leq D} f_{K,N,D'} (a_{K,N,D'}(v))\,\,\quad \textit{if}\,\,\, K>0.
\end{cases}
\end{equation}
\end{corollary}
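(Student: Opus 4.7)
The plan is to reduce the corollary to a direct consequence of Theorem \ref{T:comparison} and Lemma \ref{lem:decreasingD}. The starting observation is purely definitional: by construction
$$
\F_{K,N,D} \;=\; \bigcup_{D'\leq D} \widetilde{\F}_{K,N,D'},
$$
so partitioning the competitor densities according to the length of their support and interchanging the two infima gives
$$
\I_{K,N,D}(v) \;=\; \inf_{D' \leq D} \widetilde{\I}_{K,N,D'}(v).
$$
Since Minkowski content and perimeter agree in the one-dimensional setting (as already noted in the proof of Theorem \ref{T:comparison} via \cite[Corollary 3.2]{CM16b}), no regularity issue enters in this reduction.

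Next I would invoke Theorem \ref{T:comparison} to replace each restricted profile by its explicit value, obtaining
$$
\I_{K,N,D}(v) \;=\; \inf_{D' \leq D} f_{K,N,D'}\!\bigl(a_{K,N,D'}(v)\bigr).
$$
For $K > 0$ this is already the asserted formula, so nothing more is needed. For $K \leq 0$ I would apply Lemma \ref{lem:decreasingD}: the map $D'\mapsto \widetilde{\I}_{K,N,D'}(v)$ is strictly decreasing, hence the infimum over $D'\leq D$ is realized at the largest admissible value $D'=D$, giving $\I_{K,N,D}(v) = f_{K,N,D}(a_{K,N,D}(v))$, as claimed.

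Because every ingredient is already in place, there is no substantial obstacle; the only point requiring care is verifying that the split of $\F_{K,N,D}$ into the subfamilies $\widetilde{\F}_{K,N,D'}$ really exhausts all competitors (one must check that an $\MCP(K,N)$ density whose support has length strictly less than $D$ may indeed be regarded as an element of some $\widetilde{\F}_{K,N,D'}$ with $D'<D$, which is immediate from the definition). The monotonicity in $D$ for $K\leq 0$, already proved by the scaling argument in Lemma \ref{lem:decreasingD}, is what explains the qualitative difference between the two cases in \eqref{equ:sharp1d} and mirrors the statement anticipated in \eqref{E:ISOexplicit} of the introduction.
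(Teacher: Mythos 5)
Your proposal is correct and follows essentially the same route the paper implicitly takes: writing $\I_{K,N,D}(v)=\inf_{D'\leq D}\widetilde{\I}_{K,N,D'}(v)$ from the definition $\F_{K,N,D}=\bigcup_{D'\leq D}\widetilde{\F}_{K,N,D'}$, substituting the explicit value from Theorem \ref{T:comparison}, and using the strict monotonicity in $D$ from Lemma \ref{lem:decreasingD} to collapse the infimum to $D'=D$ when $K\leq 0$. The paper dispatches this as "straightforward," and your argument supplies exactly the intended chain of reasoning.
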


In the case $K>0$ we expect the map $D \mapsto f_{K,N,D'} (a_{K,N,D'}(v))$ to be strictly convex as some explicit calculations for particular choices of $v$ 
would suggest. However at the moment we cannot conclude 
the existence of a unique minimizer $\bar D = \bar D(K,N,D,v) <D$ representing $\I_{K,N,D}(v)$ in the case $K>0$. This in turn affects  rigidity of the equality case of the isoperimetric inequality in the regime $K>0$.

\bigskip

\subsection{One-dimensional rigidity}

Building on Corollary \ref{C:lowerboundrigid}, we prove that 
the one-dimensional isoperimetric inequality obtained in
Theorem \ref{T:comparison} is rigid.

\begin{theorem}\label{T:rigidityonedim}
Let $h : [0,D] \to \R$ be a $\MCP(K,N)$ density which integrates to $1$. 
Assume there exists $v \in (0,1)$ such that 
$\mathcal{I}_{h}(v) =  \widetilde{\I}_{K,N,D}(v)$.
Then either $h = h_{a_{v}}$ or $h = h_{a_{1-v}}$.
\end{theorem}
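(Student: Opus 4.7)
The idea is to revisit the proof of Theorem~\ref{T:comparison} assuming the equality $\I_h(v) = f_{K,N,D}(a_v)$ (with $a_v := a_{K,N,D}(v)$), and identify precisely when each of the three types of inequalities used there can be saturated: the pointwise $\MCP$-bounds~\eqref{E:MCPdef2}, the strict monotonicity of $A_{K,N,D}$ (Lemma~\ref{L:Aincreasing}), and the pointwise lower bound $h \geq f_{K,N,D}$ (Proposition~\ref{P:lower}). In each branch the analysis will produce a point $y_0 \in (0,D)$ with $h(y_0) = f_{K,N,D}(y_0)$, whereupon the rigidity of the lower bound (Corollary~\ref{C:lowerboundrigid}) identifies $h$ with $h^{y_0}_{K,N,D}$ globally; it will only remain to verify $y_0 \in \{a_v, D-a_v\}$.

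Assume without loss of generality $a_v \leq D/2$ and fix a minimizer $E = \bigcup_{i \in \mathscr{I}} [a_i, b_i] \subseteq [0,D]$ realizing $\int_E h\,dx = v$ and $\PP_h(E) = f_{K,N,D}(a_v)$. First suppose some endpoint $a_i$ (the case $b_i$ being analogous) lies in $[a_v, D-a_v]$. Then
\[
f_{K,N,D}(a_v) = \PP_h(E) \geq h(a_i) \geq f_{K,N,D}(a_i) \geq \min_{[a_v, D-a_v]} f_{K,N,D} = f_{K,N,D}(a_v)
\]
is a chain of equalities. The strict monotonicity of $f_{K,N,D}$ on $(0,D/2)$ together with its symmetry $f_{K,N,D}(x) = f_{K,N,D}(D-x)$ (Proposition~\ref{P:lower}) forces $a_i \in \{a_v, D-a_v\}$, and combining $h(a_i) = f_{K,N,D}(a_i)$ with Corollary~\ref{C:lowerboundrigid} yields $h \in \{h_{a_v}, h_{a_{1-v}}\}$.

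Otherwise every endpoint lies outside $(a_v, D-a_v)$, and we examine the four sub-cases of the proof of Theorem~\ref{T:comparison}. In Case~1 ($\mathscr{I} = \mathscr{I}_1$), tightness in $v \leq \sum_i h(a_i) A_{K,N,D}(D-a_i) \leq A_{K,N,D}(a_v) \sum_i h(a_i) \leq A_{K,N,D}(a_v) \PP_h(E) = v$ combined with the strict monotonicity of $A_{K,N,D}$ (Lemma~\ref{L:Aincreasing}) forces $a_i = D-a_v$ for every $i$ carrying positive weight, hence $|\mathscr{I}| = 1$; the requirement $h(b_1) = 0$ combined with $h \geq f_{K,N,D} > 0$ on $(0,D)$ forces $b_1 = D$; saturation of the pointwise $\MCP$-bound on $[D-a_v, D]$ together with $h(D-a_v) = v/A_{K,N,D}(a_v) = f_{K,N,D}(D-a_v)$ and Corollary~\ref{C:lowerboundrigid} yields $h = h_{a_{1-v}}$. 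Case~2 is symmetric and gives $h = h_{a_v}$. Case~3 cannot be saturated: equality would require simultaneously $a_i = D-a_v$ and $b_i = a_v$, incompatible with $a_i \leq a_v \leq D-a_v \leq b_i$ (the borderline $a_v = D/2$ degenerates $[a_i,b_i]$ to a point, contradicting $\int_E h = v$). Case~4 forces $E = [0,a_v] \cup [D-a_v, D]$ with equality in both $\MCP$-bounds, whence $v = (h(a_v) + h(D-a_v)) A_{K,N,D}(a_v)$, i.e.\ $h(a_v) + h(D-a_v) = f_{K,N,D}(a_v)$, in direct conflict with $h(a_v) + h(D-a_v) \geq 2 f_{K,N,D}(a_v)$ from Proposition~\ref{P:lower}.

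The main delicacy is the bookkeeping in the second case: one must be attentive that the ``extra'' interval endpoints $a_j, b_j$ that contribute zero to $\PP_h$ are genuinely forced to the ambient boundary $\{0,D\}$, which follows from the strict positivity $h \geq f_{K,N,D} > 0$ on the open interval. A preliminary (but standard) technical point is the existence of a minimizer $E$ for $\I_h(v)$, which in one dimension is guaranteed by the representation $\PP_h(E) = \sum_i (h(a_i) + h(b_i))$ of one-dimensional sets of finite perimeter and by compactness of endpoint configurations in the bounded interval $[0,D]$.
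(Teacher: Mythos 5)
Your proposal is correct and follows essentially the same strategy as the paper's proof: extract a minimizing set $\cup_i[a_i,b_i]$, revisit Cases~1--4 of Theorem~\ref{T:comparison} to trace equality through each chain, use the strict monotonicity of $A_{K,N,D}$ and the pointwise lower bound $h \geq f_{K,N,D}$ to pin down the configuration, and invoke Corollary~\ref{C:lowerboundrigid} once a point with $h(y_0)=f_{K,N,D}(y_0)$ is found. The only notable differences are cosmetic: you dispose of Case~3 via the incompatibility $a_i = D-a_v$, $b_i=a_v$ with $a_i\leq a_v\leq D-a_v\leq b_i$ rather than the paper's $h(a_v)+h(D-a_v)\geq 2f_{K,N,D}(a_v)$ contradiction, and you spell out the boundary bookkeeping (endpoints landing in $[a_v,D-a_v]$, and that zero-perimeter endpoints are pushed to $\{0,D\}$ by strict positivity of $h$ on the open interval) a bit more explicitly than the paper does.
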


\begin{proof}
Assume the existence of a sequence of sets $E_{i} \subset [0,D]$ so that 
$$
\int_{E_{i}}h(x)\,dx = v, \qquad \lim_{i \to \infty}(h\L^{1}\llcorner_{[0,D]})^{+}(E_{i})= \widetilde{\I}_{K,N,D}(v).
$$
Then one can find a sequence of sets having perimeter with respect to $h$ converging to $\widetilde{\I}_{K,N,D}(v)$ still with volume $v$. 
By lower-semicontinuity we deduce the existence of a set $\cup_{i\in{\mathscr{I}}} [a_i,b_i]$ of volume $v$ such that 
$$
\sum_{i}h(a_{i}) + h(b_{i})= f_{K,N,D} (a_{K,N,D}(v)).
$$
We then proceed as in the proof of Theorem \ref{T:comparison}. \\
In the
{\bf Case 1.}, $\mathscr{I}=\mathscr{I}_{1}$, the first chain of inequalities yields 
that $\cup_{i\in{\mathscr{I}}} [a_i,b_i] = [a_{1},D]$ and strict monotonicity of $A_{K,N,D}$ implies that $D- a_{1} = a_{v}$. 
The second chain of inequalities then implies 
$$
h(D-a_{v}) = f_{K,N,D} (a_{K,N,D}(v)) = f_{K,N,D} (D - a_{K,N,D}(v)). 
$$
Corollary \ref{C:lowerboundrigid} yields $h = h_{D-a_v}$ and the  set $\cup_{i\in{\mathscr{I}}} [a_i,b_i] = [D-a_{v}, D]$.   
Equality in {\bf Case 2.}, $\mathscr{I}=\mathscr{I}_{2}$, implies, repeating the same argument, that 
$h = h_{a_v}$ and the  set $\cup_{i\in{\mathscr{I}}} [a_i,b_i] = [0,a_{v}]$.   
Equality in {\bf Case 3.} cannot be achieved: the chain of inequality implies that 
$\cup_{i\in{\mathscr{I}}} [a_i,b_i] = [a_{1},b_{1}]$ and $a_{1} = a_{v}$ and $b_{1} = D-a_{v}$; coupled with the chain of inequality implies 
$$
f_{K,N,D}(a_{v}) = h (a_{v}) + h(D-a_{v}) \geq 2  f_{K,N,D}(a_{v}),
$$
giving a contradiction. The same argument implies that also equality in {\bf Case 4.} cannot be achieved.
\end{proof}

Exploiting Lemma \ref{lem:decreasingD},
in the case $K\leq 0$ one can obtain the following stronger rigidity
\begin{corollary}\label{C:realrigid}
Let $h : [0,D'] \to \R$ be a $\MCP(K,N)$ density which integrates to $1$ with $K\leq 0$. 
Assume there exists $v \in (0,1)$ such that 
$\mathcal{I}_{h}(v) =  \I_{K,N,D}(v)$ with $D' \leq D$.
Then $D = D'$ and either $h = h_{a_{v}}$ or $h = h_{a_{1-v}}$.
\end{corollary}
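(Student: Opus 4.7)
The plan is to reduce everything to the already-established one-dimensional rigidity (Theorem \ref{T:rigidityonedim}) by first pinning down the diameter and then invoking classification. Since $h$ is an $\MCP(K,N)$-density on $[0,D']$ with $D' \leq D$, by definition $h\L^{1} \in \F_{K,N,D}$, and in particular $h\L^{1} \in \widetilde{\F}_{K,N,D'}$. Hence $\mathcal{I}_{h}(v) \geq \widetilde{\I}_{K,N,D'}(v)$. On the other hand, since $K \leq 0$, the representation \eqref{equ:sharp1d} gives $\I_{K,N,D}(v) = f_{K,N,D}(a_{K,N,D}(v)) = \widetilde{\I}_{K,N,D}(v)$.

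First I would show $D = D'$. Chaining the previous observations with the equality hypothesis yields
\[
\widetilde{\I}_{K,N,D'}(v) \leq \mathcal{I}_{h}(v) = \I_{K,N,D}(v) = \widetilde{\I}_{K,N,D}(v).
\]
Now invoke Lemma \ref{lem:decreasingD}: for $K \leq 0$ the map $D \mapsto \widetilde{\I}_{K,N,D}(v)$ is strictly decreasing. If $D' < D$ were to hold, this strict monotonicity would force $\widetilde{\I}_{K,N,D'}(v) > \widetilde{\I}_{K,N,D}(v)$, contradicting the chain above. Thus $D = D'$.

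Given $D = D'$, the chain collapses to $\mathcal{I}_{h}(v) = \widetilde{\I}_{K,N,D}(v)$, so $h$ realizes equality in the one-dimensional isoperimetric inequality on its own domain $[0,D]$. Theorem \ref{T:rigidityonedim} then applies directly, yielding $h = h_{a_{v}}$ or $h = h_{a_{1-v}}$, which concludes the proof.

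The only genuinely non-trivial input is Lemma \ref{lem:decreasingD} through its \emph{strict} monotonicity in $D$ (which holds for $K \leq 0$); once that is in hand, the argument is a short two-step reduction with no further obstacles. Note that the same strategy fails for $K > 0$, since Lemma \ref{lem:decreasingD} only gives monotonicity of the rescaled quantity $D\, \widetilde{\I}_{K,N,D}(v)$ and, as remarked after \eqref{equ:sharp1d}, uniqueness of the minimizing diameter in the representation of $\I_{K,N,D}(v)$ is not presently available.
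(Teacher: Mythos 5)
Your proof is correct and follows exactly the paper's approach: the paper's own proof is the one-line remark that Lemma \ref{lem:decreasingD} forces $D'=D$, after which Theorem \ref{T:rigidityonedim} applies, and your argument simply spells out the intermediate inequalities ($\widetilde{\I}_{K,N,D'}(v) \leq \mathcal{I}_{h}(v) = \I_{K,N,D}(v) = \widetilde{\I}_{K,N,D}(v)$) that make this implication explicit.
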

\begin{proof}
Lemma \ref{lem:decreasingD} forces $D' = D$ and then Theorem 
\ref{T:rigidityonedim} applies.
\end{proof}

To conclude we present another application of one-dimensional rigidity. 
Since $\CD(K,N)\subset\MCP(K,N)$, we already know that  $\widetilde{\I}_{K,N,D} (v)\leq \widetilde{\I}^{\CD}_{K,N,D} (v)$.
We can now prove that the inequality is always strict, made exception of a single case.

\begin{corollary}\label{C:comparison}
For every choice of $K$, $N$ and $D$, except the case in which $K>0$ and $D= \pi \sqrt{(N-1)/K}$, it holds  
$$
\widetilde{\I}_{K,N,D} (v) < \I^{\CD}_{K,N,D} (v).
$$
In particular, $\I_{K,N,D} (v) < \I^{\CD}_{K,N,D} (v)$.
\end{corollary}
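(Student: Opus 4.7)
The plan is to argue by contradiction, exploiting one-dimensional rigidity (Theorem~\ref{T:rigidityonedim}) together with the fact that the MCP extremizer $h_{a_v}$ fails the $\CD(K,N)$ condition outside the excluded case (Lemma~\ref{lem:nocd}). Heuristically, if equality held then the $\CD$ infimum would be realized by a density which, being also MCP-minimal, must coincide (by rigidity) with $h_{a_v}$ or its reflection, contradicting its non-CD character.

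Concretely, fix $(K,N,D)$ outside the excluded case and $v \in (0,1)$, and suppose $\widetilde{\I}_{K,N,D}(v) = \I^{\CD}_{K,N,D}(v)$. Using the one-dimensional characterization of $\I^{\CD}_{K,N,D}(v)$ as an infimum over $\CD(K,N)$ densities on intervals $[0,D']$ with $D' \le D$, pick a minimizing sequence $(g_n,A_n)$ with $g_n$ a $\CD(K,N)$-density on $[0,D_n]$ integrating to one, $\int_{A_n}g_n\,dx = v$, and $\PP_{g_n}(A_n) \to \I^{\CD}_{K,N,D}(v)$. A compactness argument based on the uniform $L^\infty$ bound of Lemma~\ref{lem:apriori0} (forcing $D_n$ to stay away from $0$ through the constraints $v\in(0,1)$ and bounded perimeter) and the uniform local Lipschitz estimates coming from \eqref{E:MCPdef2} then extracts a subsequence with $D_n \to D^* \in (0,D]$ and $g_n \to g^*$ uniformly on compacts of $(0,D^*)$, with $g^*$ a $\CD(K,N)$-density on $[0,D^*]$ by pointwise stability of \eqref{equ:defcd}. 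Lower semicontinuity of the perimeter gives $\I_{g^*}(v) \le \widetilde{\I}_{K,N,D}(v)$, while the opposite inequality $\I_{g^*}(v) \ge \widetilde{\I}_{K,N,D^*}(v)$ combined with the monotonicity of the restricted MCP profile (Lemma~\ref{lem:decreasingD} for $K\le 0$, or the definition \eqref{equ:sharp1d} for $K>0$) yields that $g^*$ attains the restricted MCP profile at scale $D^*$. Theorem~\ref{T:rigidityonedim} then forces $g^*$ to coincide with the extremizer $h_{a_{v}}$ or $h_{a_{1-v}}$ built on $[0,D^*]$, contradicting Lemma~\ref{lem:nocd} (the borderline $K > 0$, $D^*=\pi\sqrt{(N-1)/K}$ is ruled out by hypothesis).

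The ``In particular'' clause then follows from \eqref{equ:sharp1d}, which gives $\I_{K,N,D}(v) \le \widetilde{\I}_{K,N,D}(v) < \I^{\CD}_{K,N,D}(v)$ in the non-degenerate regime; the boundary case $K > 0$, $D = \pi\sqrt{(N-1)/K}$ is handled by noting that $\I_{K,N,D}(v)$ takes the infimum over $D' \le D$ and any $D' < D$ falls into the non-degenerate regime where the first strict inequality applies. I expect the main obstacle to be making the compactness step precise: passing both the $\CD$ condition and the volume/perimeter constraints to the limit, excluding degeneration $D_n \to 0$, and in the $K>0$ regime ensuring that the limiting diameter $D^*$ does not coincide with the forbidden value $\pi\sqrt{(N-1)/K}$ where Lemma~\ref{lem:nocd} would fail.
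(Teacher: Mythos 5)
Your overall skeleton is the same as the paper's: argue by contradiction, produce a $\CD(K,N)$-density attaining the $\CD$ profile, invoke Theorem~\ref{T:rigidityonedim} to identify it with $h_{a_v}$ or $h_{a_{1-v}}$, and contradict Lemma~\ref{lem:nocd}. The genuinely different step is in how you obtain the extremizing density: the paper directly cites Milman \cite{Mil}, specifically Corollary 1.4 (that $\I^{\CD}_{K,N,D}(v)=\widetilde{\I}^{\CD}_{K,N,D}(v)$, i.e.\ the restricted and unrestricted $\CD$ profiles coincide) and Corollary A.3 (that the profile is attained by a $\CD(K,N)$-density supported on \emph{exactly} $[0,D]$), whereas you try to re-derive attainment via a hand-made compactness argument.

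This is where your proof has a genuine gap, and it occurs precisely in the case $K>0$. Your compactness scheme produces a limiting $\CD(K,N)$-density $g^*$ on $[0,D^*]$ with $D^* \le D$, and you then need $\I_{g^*}(v) = \widetilde{\I}_{K,N,D^*}(v)$ in order to invoke Theorem~\ref{T:rigidityonedim}. Chaining the inequalities one obtains $\widetilde{\I}_{K,N,D^*}(v) \le \I_{g^*}(v) = \I^{\CD}_{K,N,D}(v) = \widetilde{\I}_{K,N,D}(v)$, so what is actually needed is the reverse bound $\widetilde{\I}_{K,N,D^*}(v) \ge \widetilde{\I}_{K,N,D}(v)$, or equivalently the conclusion $D^*=D$. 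For $K \le 0$ this follows from the strict monotonicity of $D \mapsto \widetilde{\I}_{K,N,D}(v)$ given by Lemma~\ref{lem:decreasingD}, and there your argument closes. For $K>0$, however, the map $D' \mapsto \widetilde{\I}_{K,N,D'}(v)$ is \emph{not} monotone (this is exactly the phenomenon that makes the $\MCP$ picture different from the $\CD$ one), and \eqref{equ:sharp1d}, which you cite, is a formula for the \emph{unrestricted} profile $\I_{K,N,D}(v)=\inf_{D'\le D}\widetilde{\I}_{K,N,D'}(v)$; it gives no control on how $\widetilde{\I}_{K,N,D'}(v)$ varies with $D'$. So for $K>0$ nothing in your argument rules out $D^*<D$, in which case $\I_{g^*}(v)$ may be strictly larger than $\widetilde{\I}_{K,N,D^*}(v)$ and rigidity cannot be applied. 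The paper sidesteps this entirely because Milman's Corollary A.3 already delivers the extremizer on the full interval $[0,D]$; the cleanest fix for you is to quote that result (or to quote Milman's fact that the $\CD$ profile is strictly decreasing in $D$, which pins $D^*=D$ for all signs of $K$) rather than attempting to redo the compactness from scratch.

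A minor remark on the ``in particular'' clause: the statement as written excludes the borderline case $K>0$, $D=\pi\sqrt{(N-1)/K}$, so your extra discussion of that case is unnecessary; moreover as phrased it would also require knowing that $\I^{\CD}_{K,N,D'}(v)\ge \I^{\CD}_{K,N,D}(v)$ for $D'<D$, which you would again have to take from Milman.
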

\begin{proof}
Suppose by contradiction the existence of $K,N,D, v$ such that 
$\widetilde{\I}_{K,N,D} (v) = \I^{\CD}_{K,N,D} (v)$.
As proved in \cite{Mil}(see Corollary 1.4)
$$
\I^{\CD}_{K,N,D} (v) = \widetilde{\I}^{\CD}_{K,N,D} (v),
$$
and there exists (see \cite[Corollary A.3]{Mil}) a $\CD(K,N)$-density, and therefore an $\MCP(K,N)$-density $g$ defined on $[0,D]$ and integrating to $1$ such that  $\I_{([0,D], g)}(v)= {\I}^{\CD}_{K,N,D} (v)$. As observed in the   Theorem \ref{T:rigidityonedim}, this would force the density $g$  to be  exactly $h_{a_v}$ or  $h_{a_{1-v}}$ contradicting Lemma \ref{lem:nocd}.
The final claim simply follows observing that
$\inf_{D'\leq D} \widetilde{\I}_{K,N,D'} (v) \leq \I_{K,N, D} (v)$.
\end{proof}

\bigskip

\section{Isoperimetric inequality}\label{S:final}

We now deduce Theorem \ref{T:ISOMCP} from the one-dimensional results 
of Theorem \ref{T:comparison} and Lemma \ref{lem:decreasingD} via localization techniques; we now briefly recall few facts on localization. 

The localization paradigm, developed by Payne--Weinberger \cite{PayneWeinberger}, Gromov--Milman \cite{Gromov-Milman} and Kannan--Lov\'asz--Simonovits \cite{KLS}, permits to reduce various analytic and geometric inequalities to appropriate one-dimensional counterparts. The original approach by these authors was based on a bisection method, and thus inherently confined to $\Real^n$. In 2015 \cite{klartag}, Klartag extended the localization paradigm to the weighted Riemannian setting, by disintegrating the reference measure $\mm$ on $L^1$-Optimal-Transport geodesics associated to the inequality under study, and proving that the resulting conditional one-dimensional measures inherit the Curvature-Dimension properties of the underlying manifold.

The first author and Mondino in \cite{CM1} extended the localization paradigm to the framework of essentially non-branching geodesic m.m.s.'s $(X,\sfd,\mm)$ verifying $\CD_{loc}(K,N)$, $N \in (1,\infty)$:  the Curvature-Dimension information encoded in the $W_2$-geodesics is transferred to the individual rays along which a given $W_1$-geodesic evolves; this has permitted to obtain several new results in the field 
\cite{CM2,CM17,CMM}.

Localization for $\MCP(K,N)$ was, partially and in a different form, already known 
in 2009, see \cite[Theorem 9.5]{biacava:streconv}, for non-branching m.m.s.. 
The case of essentially non-branching m.m.s.'s and an effective reformulation (after the work of Klartag \cite{klartag}) has been recently discussed in \cite[Section 3]{CM18} to which we refer for all the missing details (see in particular \cite[Theorem 3.5]{CM18}). 
Here we only report the next fact:

\smallskip
If $(X,\sfd,\mm)$ is an essentially non-branching m.m.s. with $\supp(\mm) = X$ and satisfying $\MCP(K,N)$, for some $K\in \R, N\in (1,\infty)$,  
then, for any $1$-Lipschitz function $u : X \to \R$, 
the non-branching transport set $\T_{u}^{b}$ associated with $u$ 
(roughly coinciding, up to a set of $\mm$-measure zero, with $\{|\nabla u| =1\}$) admits a disjoint family of unparametrized geodesics $\{ X_{\alpha} \}_{\alpha \in Q}$ such that $\mm(\T_{u}^{b} \setminus \cup_{\alpha} X_{\alpha})= 0$
and the corresponding disintegration of $\mm$ is as follows
\begin{equation}\label{E:disint}
\mm\llcorner_{\T_{u}^{b}} = \int_{Q} \mm_{\alpha} \, \qq(d\alpha), \qquad \qq(Q) = 1, \qquad \qq{\rm -a.e.}\ \ \mm_{\alpha}(X) = \mm_{\alpha}(X_{\alpha})=1.
\end{equation}
Moreover, $\qq$-a.e. $\mm_{\alpha}$ is a Radon measure  with $\mm_{\alpha}=h_{\alpha} \cH^{1}\llcorner_{X_{\alpha}} \ll \cH^{1}\llcorner_{X_{\alpha}}$
and $(X_{\alpha},\sfd,\mm_{\alpha})$ verifies $\MCP(K,N)$.

This permits  to obtain the next main result and to prove Theorem \ref{T:ISOMCP}; notice that the second part of 
Theorem \ref{T:ISOMCP} will then follow by Theorem \ref{T:comparison}.

\begin{theorem}
\label{teo:main}
Let $(X,\sfd,\mm)$ be an essentially non-branching metric measure space    with $\mm(X)=1$ and $\diam(X)\leq D$. If $(X,\sfd,\mm)$ satisfies $\MCP(K,N)$ for some $K\in{\R}, N\in{[1, \infty)}$, then  
\[
\I_{(X,\sfd,\mm)}(v)\geq \I_{K,N,D}(v),\,\,\,\,\forall v\in{[0,1]}
\]
where $\I_{K,N,D}$ is explicetely given in \eqref{equ:sharp1d}.
\end{theorem}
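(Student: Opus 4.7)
The plan is to reduce to the one-dimensional results (Theorem \ref{T:comparison} and Lemma \ref{lem:decreasingD}) via the localization machinery recalled right above, following the scheme developed in \cite{CM1} for $\CD(K,N)$ and adapted to $\MCP(K,N)$ in \cite{CM18}. Fix $A\subset X$ Borel with $v=\mm(A)\in(0,1)$ (the endpoint values of $v$ being trivial). Introduce the centered ``guide function'' $f:=\chi_A-v$, which has zero mean against $\mm$, and let $u:X\to\R$ be a $1$-Lipschitz Kantorovich potential for the $L^1$-optimal transport between $f^+\mm$ and $f^-\mm$. The localization theorem cited in the excerpt then yields a non-branching transport set $\T_u^b$, a quotient $(Q,\qq)$, and a disintegration \eqref{E:disint} in which $\qq$-a.e.\ conditional space $(X_\alpha,\sfd,\mm_\alpha)$ is an $\MCP(K,N)$ space isometric to a real interval of length $D_\alpha\leq\diam(X)\leq D$, with density $h_\alpha$ integrating to $1$.

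Two properties of this disintegration will be imported from \cite[Theorem 3.5]{CM18}: the balanced property $\int_{X_\alpha}f\,d\mm_\alpha=0$ for $\qq$-a.e.\ $\alpha$, which reads $\mm_\alpha(A\cap X_\alpha)=v$; and the fact that $f\equiv 0$ holds $\mm$-a.e.\ on $X\setminus\T_u^b$, so that this residual region contributes neutrally to both the volume constraint and the perimeter estimate.

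Next I would establish the Fubini-type lower bound
\[
\mm^+(A)\;\geq\;\int_Q \mm_\alpha^+(A\cap X_\alpha)\,\qq(d\alpha).
\]
Since each $X_\alpha$ is a geodesic subset of $X$, its intrinsic $\varepsilon$-enlargement obeys $(A\cap X_\alpha)^\varepsilon\subseteq A^\varepsilon\cap X_\alpha$; writing
\[
\mm(A^\varepsilon)-\mm(A)\;\geq\;\int_Q\bigl[\mm_\alpha(A^\varepsilon\cap X_\alpha)-\mm_\alpha(A\cap X_\alpha)\bigr]\,\qq(d\alpha),
\]
dividing by $\varepsilon$ and applying Fatou's lemma deliver the claim.

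To close, Theorem \ref{T:comparison} applied on each ray gives $\mm_\alpha^+(A\cap X_\alpha)\geq\widetilde{\I}_{K,N,D_\alpha}(v)$ for $\qq$-a.e.\ $\alpha$. When $K\leq 0$, Lemma \ref{lem:decreasingD} shows that $D'\mapsto\widetilde{\I}_{K,N,D'}(v)$ is strictly decreasing, whence $\widetilde{\I}_{K,N,D_\alpha}(v)\geq\widetilde{\I}_{K,N,D}(v)=\I_{K,N,D}(v)$; when $K>0$, formula \eqref{equ:sharp1d} directly yields $\I_{K,N,D}(v)=\inf_{D'\leq D}\widetilde{\I}_{K,N,D'}(v)\leq\widetilde{\I}_{K,N,D_\alpha}(v)$. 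Integrating against $\qq(Q)=1$ concludes $\mm^+(A)\geq\I_{K,N,D}(v)$. The main obstacle is the correct invocation of the balanced property together with the $\MCP(K,N)$ one-dimensional structure of $\qq$-a.e.\ conditional, and the transfer from a multi-dimensional Minkowski content to its one-dimensional counterparts along the rays; these points are exactly where the essentially non-branching assumption enters, and they are supplied by \cite[Theorem 3.5]{CM18}. Everything else is a direct application of the sharp one-dimensional analysis of Section \ref{S:oned}.
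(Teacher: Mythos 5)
Your proof follows essentially the same route as the paper's: introduce the zero-mean guide $f=\chi_A-v$, take a Kantorovich potential for the $L^1$ transport between $f^+\mm$ and $f^-\mm$, invoke the $\MCP$ localization of \cite[Theorem~3.5]{CM18} to disintegrate $\mm$ into balanced one-dimensional $\MCP(K,N)$ conditionals with $\mm_\alpha(A\cap X_\alpha)=v$, lower-bound the Minkowski content by Fatou along the rays, and conclude by the one-dimensional inequality of Theorem~\ref{T:comparison}. The only difference is that you make explicit the step $\mm_\alpha^+(A\cap X_\alpha)\geq\widetilde{\I}_{K,N,D_\alpha}(v)\geq\I_{K,N,D}(v)$ (using Lemma~\ref{lem:decreasingD} for $K\leq0$ and the infimum in \eqref{equ:sharp1d} for $K>0$), which the paper leaves implicit when it writes the lower bound directly as $\I_{K,N,D}(v)$.
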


Even though the proof is a standard consequence of localization, we present it 
below for readers' convenience. 

\begin{proof}
Fix $v\in(0,1)$ and let $A\subset X$ be a Borel set with $\mm(A) = v$.
Define the $\mm$-measurable function $f:=\chi_A-v$ having zero integral with respect to $\mm$, and study the $L^{1}$-Optimal Transport problem from $\mu_{0} : = f^{+}\mm$ to $\mu_{1} : = f^{-}\mm$, where $f^{\pm} $ denotes the positive and the negative part of $f$ respectively. 
The associated Kantorovich potential $u$ has $|\nabla u| = 1$ $\mm$-a.e.
implying the existence of a family of unparametrized geodesics $\{ X_{\alpha} \}_{\alpha \in Q}$ (of length at most $D$) such that $\mm(X \setminus \cup_{\alpha} X_{\alpha})= 0$ and
$$
\mm= \int_{Q} \mm_\alpha \,  \q(d\alpha), \qquad \qq-a.e. \ \mm_{\alpha}(X) 
= \mm_{\alpha}(X_{\alpha}) =1;
$$
moreover $\mm_\alpha = h_\alpha \H^1\llcorner_{X_{\alpha}}$ and $h_{\alpha}$ is a 
$\MCP(K,N)$-density.
From the localization of the constraint, it follows that for $\q$-a.e. 
$\mm_{\alpha}(A ) = \mm (A) = v$.
Hence 
\begin{align*}
\mm^{+}(A)&=\liminf_{\varepsilon\to 0}
\frac{ \mm(A^{\varepsilon})-\mm(A)}{\varepsilon}\\
&\geq \liminf_{\varepsilon\to 0} \int_{\mathcal{Q}} 
\frac{ \mm_{\alpha}((A\cap X_{\alpha})^{\varepsilon})-\mm_{\alpha}(A)}{\varepsilon} \,\q(d\alpha),\\
&\geq \int_{\mathcal{Q}} \mm^{+}_{\alpha}(A\cap X_{\alpha})\, \q(dq)\\
&\geq \int_{Q} \I_{K,N,D}(v)\,\q(q)\\
&= \I_{K,N,D}(v).
\end{align*}
\end{proof}

In the case $K\leq 0$, one-dimensional rigidity (Theorem \ref{T:rigidityonedim}) implies the following measure rigidity.

\begin{theorem}\label{T:rigidityfull}
Let $(X,\sfd,\mm)$ be an essentially non-branching metric measure space  satisfying    $\MCP(K,N)$ for $K\leq0$, $N\in{[1,\infty)}$  with $m(X)=1$ and $\diam(X)\leq D$.  

If there exists $v\in{(0,1)}$ such that $\I_{(X,d,m)}(v)=\I_{K,N,D}(v)$,  then $\diam(X) = D$, there exist a measure space $(Q,\q)$  and a measurable isomorfism between $(0,D)\times Q$   and $X' \subset X$ with $\mm(X')=1$. 

Moreover, the measure $\mm$ admits the following representation 
$$
\mm = \int_{Q} h_{\alpha}\,\H^1\llcorner_{X_{\alpha}} \, \q(d\alpha),
$$
and $\qq$-a.e., $h_{\alpha} = h_{a_{K,N,D}(v)}$ or 
$h_{\alpha} = h_{a_{K,N,D}(1-v)}$.
\end{theorem}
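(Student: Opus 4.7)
The plan is to combine the localization argument already used in the proof of Theorem \ref{teo:main} with the sharper one-dimensional rigidity statement of Corollary \ref{C:realrigid}, and then assemble the resulting family of optimal one-dimensional densities into a global product structure.

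First, I would repeat the setup of the proof of Theorem \ref{teo:main}: fix $A \subset X$ with $\mm(A)=v$ realizing the equality $\mm^{+}(A) = \I_{K,N,D}(v)$, consider the balance function $f = \chi_{A} - v$, and let $u$ be the Kantorovich potential for the $L^{1}$-transport from $f^{+}\mm$ to $f^{-}\mm$. The localization theorem for $\MCP(K,N)$ produces a disintegration
\[
\mm\llcorner_{\T_{u}^{b}} = \int_{Q}\mm_{\alpha}\,\qq(d\alpha), \qquad \mm_{\alpha} = h_{\alpha}\H^{1}\llcorner_{X_{\alpha}},
\]
where each $(X_{\alpha},\sfd,\mm_{\alpha})$ verifies $\MCP(K,N)$ with $\diam(X_{\alpha}) \leq D$, and the conservation of mass along rays gives $\mm_{\alpha}(A \cap X_{\alpha}) = v$ for $\qq$-a.e.\ $\alpha$.

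Next, I would exploit equality in the chain of inequalities derived in the proof of Theorem \ref{teo:main}: since $\mm^{+}(A) = \I_{K,N,D}(v)$ and the integrand $\mm_{\alpha}^{+}(A\cap X_{\alpha})$ is pointwise bounded below by $\I_{K,N,D}(v)$, we must have
\[
\mm_{\alpha}^{+}(A\cap X_{\alpha}) = \I_{K,N,D}(v) \qquad \text{for } \qq\text{-a.e.\ } \alpha \in Q.
\]
Now I apply Corollary \ref{C:realrigid} (which is precisely the statement needed for $K \leq 0$): for $\qq$-a.e.\ $\alpha$, the length of $X_{\alpha}$ equals $D$, and $h_{\alpha}$ is either $h_{a_{K,N,D}(v)}$ or $h_{a_{K,N,D}(1-v)}$, after identifying $X_{\alpha}$ isometrically with $[0,D]$. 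In particular $\diam(X) = D$, proving the first claim.

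Finally, I would construct the measurable isomorphism. Since $\qq$-a.e.\ $X_{\alpha}$ is a geodesic of length exactly $D$, choose for each $\alpha$ an arc-length parametrization $g_{\alpha} \colon [0,D] \to X_{\alpha}$; the joint measurability of $(t,\alpha) \mapsto g_{\alpha}(t)$ is standard in the localization framework (it is built into the measurable selection giving the ray map). Define $\Phi \colon [0,D] \times Q \to X$ by $\Phi(t,\alpha) := g_{\alpha}(t)$; its image $X' := \bigcup_{\alpha \in Q} X_{\alpha}$ has full $\mm$-measure by the localization theorem, and $\Phi$ is injective on a full-measure subset by disjointness of the rays. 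The pushforward formula yields the advertised integral representation
\[
\mm = \int_{Q} h_{\alpha}\,\H^{1}\llcorner_{X_{\alpha}}\,\qq(d\alpha),
\]
with $h_{\alpha} \in \{h_{a_{K,N,D}(v)},\,h_{a_{K,N,D}(1-v)}\}$ for $\qq$-a.e.\ $\alpha$. The main technical obstacle is the choice of a consistent orientation for the parametrizations $g_{\alpha}$: without it the density $h_{\alpha}$ is only determined up to the reflection $x \mapsto D-x$, which is exactly why the two alternatives $h_{a_{v}}$ and $h_{a_{1-v}}$ both appear in the conclusion. This is a mild measurable selection issue handled by fixing an orientation convention on the ray structure produced by localization.
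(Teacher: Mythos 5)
Your plan is close to the paper's, but there is one genuine gap: you begin by fixing a set $A$ with $\mm(A)=v$ \emph{realizing the equality} $\mm^{+}(A)=\I_{K,N,D}(v)$. The hypothesis of the theorem is only that the infimum $\I_{(X,\sfd,\mm)}(v)$ equals $\I_{K,N,D}(v)$; the infimum need not be attained, and you give no argument that it is. This is not a cosmetic point: the whole subsequent argument (equality forced $\qq$-a.e.\ in the localized chain of inequalities, then Corollary \ref{C:realrigid} ray by ray) is conditional on having such an $A$ in hand.

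The paper avoids this by structuring the proof differently. It first proves $\diam(X)=D$ by contradiction, using only \emph{almost}-minimizers: if $\diam(X)\leq D-\varepsilon$, then by Lemma \ref{lem:decreasingD} (strict decrease of $D'\mapsto\widetilde{\I}_{K,N,D'}(v)$ when $K\leq 0$) there is $\eta>0$ with $\widetilde{\I}_{K,N,D'}(v)\geq\I_{K,N,D}(v)+\eta$ for all $D'\leq D-\varepsilon$; picking $A$ with $\mm^{+}(A)\leq\I_{K,N,D}(v)+\eta/2$ (which exists by definition of infimum) and localizing gives $\mm^{+}(A)\geq\int_{Q}\widetilde{\I}_{K,N,|\supp h_{\alpha}|}(v)\,\qq(d\alpha)\geq\I_{K,N,D}(v)+\eta$, a contradiction. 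The same almost-minimizer argument then forces $|\supp h_{\alpha}|=D$ $\qq$-a.e.\ and equality in the one-dimensional profile, after which Corollary \ref{C:realrigid} is invoked exactly as you do. In short: the paper's key technical device for getting $\diam(X)=D$ is the quantitative gap coming from strict monotonicity in $D$, which lets it work with $\eta/2$-minimizers rather than exact minimizers; your proof omits this step and therefore needs (but does not supply) an existence result for the isoperimetric minimizer. Once that is granted, your remaining reasoning — localization, pointwise equality, Corollary \ref{C:realrigid}, and the orientation remark about $h_{a_{v}}$ versus $h_{a_{1-v}}$ — agrees with the paper's.
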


\begin{proof}
We will prove that $X$ has diameter $D$. Arguing by contradiction, let us suppose that  there exists $\varepsilon>0$ such that $\diam(X)=D-\varepsilon$.
From \eqref{equ:sharp1d}, $K \leq 0$ and  Lemma \ref{lem:decreasingD}, for any $v\in{(0,1)}$ the function $\I_{K,N,D}(v)$ is strictly decreasing in $D$. Hence, there exists $\eta>0$ such that
\[
\I_{K,N,D'}(v) \geq \I_{K,N,D}(v)+\eta,\quad\forall D'\in{(0, D-\varepsilon]}.
\]
Let $A\subset X$ be such that $\mm(A)=v$ and $\mm^{+}(A)\leq \I_{K,N,D}(v)+\eta/2$. Arguing as in the proof of  Theorem \ref{teo:main}, we get that
\begin{align*}
\I_{K,N,D}(v)+\eta/2 \geq \mm^{+}(A) 
&\geq \int_{Q} \mm_{\alpha}^{+} ( A \cap X_{\alpha})\, \q(d\alpha) \\
&\geq \int_{Q} \widetilde{\I}_{K,N,|\supp\, h_\alpha|}(v)\,\q(d\alpha)\\
&\geq \I_{K,N,D}(v)+\eta
\end{align*}
where the last inequality is due to the fact that   $\supp (h_\alpha)$ is isometric to a geodesic $X_\alpha$ of $(X,\sfd)$ and hence $|\supp\,  h_\alpha|\leq D-\varepsilon$ and from $K\leq 0$ together with Lemma \ref{lem:decreasingD}. 
Thus the contradiction is obtained. \\ 
The same argument implies that $|\supp(h_\alpha)|=D$ for $\q$-a.e. $\alpha$ and 
$$
\I_{K,N,D}(v) = \widetilde{\I}_{K,N,|\supp\, h_q|}(v);
$$ 
the claim follows from the one-dimensional rigidity obtained in Corollary 
\ref{C:realrigid}.
\end{proof}

\bigskip

\end{document}